\newtheorem{theorem}{Theorem}[section]
\newtheorem{lemma}[theorem]{Lemma}
\newtheorem{proposition}[theorem]{Proposition}
\newtheorem{corollary}[theorem]{Corollary} 
\theoremstyle{definition}  
\newtheorem{definition}[theorem]{Definition}
\newtheorem{example}[theorem]{Example}
\newtheorem{conjecture}[theorem]{Conjecture}  
\newtheorem{remark}[theorem]{Remark}
\newcommand{\rank}{\text{rank}}
\newcommand{\FPdim}{\text{FPdim}} 
\renewcommand{\Vec}{\text{Vec}}
\newcommand{\BrPic}{\text{BrPic}}
\newcommand{\Hom}{\text{Hom}} 
\newcommand{\uHom}{\underline{\text{Hom}}}
\newcommand{\Rep}{\text{Rep}}
\newcommand{\rev}{\text{rev}}
\newcommand{\sVec}{\text{s}\Vec}
\newcommand{\B}{\mathcal{B}}
\newcommand{\C}{\mathcal{C}}
\newcommand{\D}{\mathcal{D}}
\newcommand{\E}{\mathcal{E}}
\newcommand{\Z}{\mathcal{Z}}
\renewcommand{\L}{\mathcal{L}}
\newcommand{\M}{\mathcal{M}}
\newcommand{\N}{\mathcal{N}}
\newcommand{\be}{\mathbf{1}}
\renewcommand{\be}{\mathbf{1}}
\newcommand{\mN}{{\mathcal N}}
\newcommand{\cC}{{\mathcal C}}
\newcommand{\mD}{{\mathcal D}}
\newcommand{\mM}{{\mathcal M}}
\newcommand{\bt}{\boxtimes}
\newcommand{\ot}{\otimes}
\newcommand{\bth}{\begin{theorem}}
\renewcommand{\eth}{\end{theorem}}
\newcommand{\bpr}{\begin{proposition}}
\newcommand{\epr}{\end{proposition}}
\newcommand{\ble}{\begin{lemma}}
\newcommand{\ele}{\end{lemma}}
\newcommand{\bco}{\begin{corollary}}
\newcommand{\eco}{\end{corollary}}
\newcommand{\bde}{\begin{definition}}
\newcommand{\ede}{\end{definition}}
\newcommand{\bex}{\begin{example}}
\newcommand{\eeqx}{\end{example}}
\newcommand{\bre}{\begin{remark}}
\newcommand{\ere}{\end{remark}}
\newcommand{\bcj}{\begin{conjecture}}
\newcommand{\ecj}{\end{conjecture}}
\newcommand{\beq}{\begin{equation}}
\newcommand{\eeq}{\end{equation}}
\newcommand{\bpf}{\begin{proof}}
\newcommand{\epf}{\end{proof}}
\begin{document}
\title{Rank-finiteness for G-crossed braided fusion categories}
\author{Corey Jones, Scott Morrison, Dmitri Nikshych, Eric C. Rowell}
\date{\today}
\thanks{ECR is partially supported by NSF grant DMS-1664359.  
DN was partially supported  by the NSA
grant H98230-16-1-0008 and the NSF grant DMS-1801198. 
This paper was initiated while ECR and DN were visiting CJ and SM at the Australian National University, and gratefully acknowledge the support of that institution.}
\begin{abstract} We establish rank-finiteness for the class of $G$-crossed braided fusion categories, generalizing the recent result for modular categories and including the important case of braided fusion categories. This necessitates a study of slightly degenerate braided fusion categories and their centers, which are interesting for their own sake. 
\end{abstract} 
\maketitle  

\section{Introduction}

The question of whether there are finitely many fusion categories with a fixed number of isomorphism classes of simple objects (i.e., fixed \emph{rank}) was first raised by Ostrik in \cite{OstrikRank2}, where an affirmative answer was given for rank $2$.  In \cite{ENO1} the special case of categories with integral Frobenius-Perron dimension (i.e. \emph{weakly integral} categories) was also settled.  Around 2003 Wang conjectured that there are always finitely many \emph{modular} categories of a given fixed rank, which was explicitly verified for rank at most $4$.  A proof of this rank-finiteness conjecture was obtained recently \cite{BNRZ}.  The main goal of this article is to extend rank-finiteness to the generality of $G$-crossed braided fusion categories, which includes the important case of braided fusion categories, and does not require the existence of a spherical structure.

The primary obstacle to overcome is the existence of \emph{slightly degenerate} braided fusion categories, with symmetric center equivalent the braided fusion category $\sVec$ of super vector spaces.  These are interesting in their own right, with the main open question being whether or not every slightly degenerate braided fusion category admits a minimal non-degenerate extension.  As a step towards answering this question we analyze the structure of the Drinfeld  center
of a slightly degenerate braided fusion category.

As a technical tool, we prove a bound on the rank of invertible $(\C-\D)$-bimodule categories. In particular, we show that for any invertible 
$\C$-bimodule category, $\rank(\M)\le \rank(\C)$. In addition, we show that the set of equivalence classes of invertible bimodule categories realizing this bound forms a subgroup of $\operatorname{BrPic}(\C)$, and discuss some examples. 
\section{Preliminaries}

We work over an algebraically closed field $k$ of characteristic $0$. All fusion categories and their module categories
are assumed to be $k$-linear. For the basics of the theory
of fusion categories we refer the reader to \cite{EGNO} and \cite{DGNO}. 

By the {\em rank} of a  fusion category we mean the number of isomorphism classes of its simple objects.

Let $\Vec$ and $\sVec$ denote the braided fusion categories of vector spaces and super vector spaces over $k$.
For any braided fusion category $\C$ let $\Z_{sym}(\C)$ denote its symmetric (or M\"uger) center.

\begin{definition}
A braided fusion category $\C$ is called {\em slightly degenerate} \cite{DNO} if $\Z_{sym}(\C)=\sVec$.
A slightly degenerate ribbon fusion category is called {\em super-modular}.
\end{definition}

The smallest example of a slightly degenerate braided fusion category is $\sVec$
itself.

\begin{example}
\label{a typical example}
One can construct a slightly degenerate braided fusion category as follows. 
Let $\tilde{\C}$ be  a non-degenerate braided fusion category and let $\sVec \hookrightarrow \tilde{\C}$
be a braided tensor functor (it is automatically an embedding). 
Then the centralizer of the image of $\sVec$ in $\C$ is slightly degenerate. 

\end{example}

Let $\C$ be a slightly degenerate braided fusion category.
Below we recall  some facts  about $\C$ from \cite{DNO, BNRZ}. 

Let $\delta$ denote the simple object generating $\Z_{sym}(\C)$. 
Then $\delta \ot X \ncong X$ for each simple object $X$ in $\C$ (see \cite[Lemma 5.4]{Mu1} and \cite[Lemma 3.28]{DGNO}).  
In particular, the rank of a slightly degenerate braided fusion category is even.

We say that $\C$ is {\em split} if 
$\C \cong  \C_0 \bt \sVec$, where $\C_0$ is a non-degenerate braided fusion category.
Any pointed slightly degenerate braided fusion category is split, see \cite[Proposition 2.6(ii)] {ENO3} or \cite[Corollary A.19]{DGNO}.

The following definition is due to  M\"uger \cite{Mu2}.

\begin{definition}
\label{def min ext}
A {\em minimal extension} of a slightly degenerate braided fusion 
(respectively, super-modular) category  $\C$ is a braided tensor functor  $\iota:  \C \hookrightarrow \tilde{\C}$,
where $\tilde{\C}$ is a non-degenerate braided fusion 
(respectively, modular) category  such that  the centralizer of $\C$ in $\tilde{\C}$ is the image of $\sVec$.
\end{definition}

Note that the above functor $\iota$ is an embedding by \cite[Corollary 3.26]{DMNO}.

Clearly, every slightly degenerate braided fusion category that admits a minimal extension can obtained
via the construction from Example~\ref{a typical example} and vice versa.

An equivalence of minimal extensions is defined in an obvious way.

\begin{example}
The category $\sVec$ has $16$ inequivalent minimal extensions \cite{DNO,kitaev}: $8$ Ising categories and $8$ pointed categories.
The Witt classes of these extensions form a subgroup of the categorical Witt group isomorphic to $\mathbb{Z}/16\mathbb{Z}$. 
\end{example}

It follows that $\FPdim(\tilde{\C}) = 2 \FPdim(\C)$. By \cite{Mu1, DGNO} this is the {\em minimal} possible  value of
the Frobenius-Perron dimension of a non-degenerate braided fusion category containing $\C$. This explains our terminology.

\begin{lemma}
\label{Z2 grad}
Let $\D$ be a fusion category and let $\D_0 \subset \D$ be a fusion subcategory such that 
$\FPdim(\D) = 2 \FPdim(\D_0)$. Then  $\D$  is faithfully $\mathbb{Z}/2\mathbb{Z}$-graded with the trivial
component $\D_0$.
\end{lemma}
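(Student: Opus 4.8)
The plan is to produce the grading by hand and read off the components from the Frobenius--Perron dimension. Let $\D_1 \subset \D$ be the full abelian subcategory additively generated by those simple objects of $\D$ that do \emph{not} lie in $\D_0$. Since $\D_0$ is a fusion subcategory, its simple objects form a subset of the simple objects of $\D$, so $\D = \D_0 \oplus \D_1$ as abelian categories and $\FPdim(\D_1) = \FPdim(\D) - \FPdim(\D_0) = \FPdim(\D_0) \neq 0$. It therefore suffices to verify the three inclusions $\D_0 \ot \D_0 \subseteq \D_0$, $\D_0 \ot \D_1, \D_1 \ot \D_0 \subseteq \D_1$, and $\D_1 \ot \D_1 \subseteq \D_0$; the last two components being nonzero then makes the resulting $\mathbb{Z}/2\mathbb{Z}$-grading faithful with trivial part $\D_0$. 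The first inclusion is automatic, and the second follows from an adjunction argument: if $Z \in \D_0$ and $Y \in \D_1$ are simple and a simple $W \in \D_0$ occurred in $Z \ot Y$, then $\Hom(Z^* \ot W, Y) \neq 0$ with $Z^* \ot W \in \D_0$, forcing $Y \in \D_0$, a contradiction (and similarly on the other side, using that $\D_0$ is closed under duals, so $\D_1$ is as well).

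The main tool for the remaining inclusion is the regular element, computed in the Grothendieck ring $\mathrm{Gr}(\D)$ (with real coefficients, via the ring homomorphism $\FPdim$). Write $R_0 = \sum_{Z} \FPdim(Z)\, Z$ and $R_1 = \sum_{W} \FPdim(W)\, W$, the sums taken over simple objects of $\D_0$ and of $\D_1$ respectively, so that $R_0 + R_1$ is the regular element of $\D$ and satisfies $(R_0 + R_1) \ot Y = \FPdim(Y)(R_0 + R_1)$ for every simple $Y$. The adjunction argument above, applied to $R_0 \ot Y$, gives the dichotomy that $R_0 \ot Y \in \D_0$ when $Y \in \D_0$ and $R_0 \ot Y \in \D_1$ when $Y \in \D_1$, since no $\D_0$-simple can appear in $R_0 \ot Y$ for $Y \in \D_1$.

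The crux, where the hypothesis $\FPdim(\D) = 2\,\FPdim(\D_0)$ is used, is the inclusion $\D_1 \ot \D_1 \subseteq \D_0$. Fix a simple $Y \in \D_1$ and decompose $R_1 \ot Y = A_0 + A_1$ into its $\D_0$- and $\D_1$-parts. Comparing $\D_0$- and $\D_1$-components in the identity $R_0 \ot Y + R_1 \ot Y = \FPdim(Y)(R_0 + R_1)$, and using that $R_0 \ot Y \in \D_1$, the $\D_0$-component yields $A_0 = \FPdim(Y)\,R_0$, hence $\FPdim(A_0) = \FPdim(Y)\,\FPdim(\D_0)$. On the other hand $\FPdim(R_1 \ot Y) = \FPdim(\D_1)\,\FPdim(Y) = \FPdim(\D_0)\,\FPdim(Y)$, so $\FPdim(A_1) = \FPdim(R_1 \ot Y) - \FPdim(A_0) = 0$ and therefore $A_1 = 0$. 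Thus $R_1 \ot Y \in \D_0$, and since every simple $W \in \D_1$ occurs in $R_1$ with positive coefficient, every simple summand of $W \ot Y$ lies in $\D_0$. This gives $\D_1 \ot \D_1 \subseteq \D_0$ and completes the grading. I expect this dimension-counting step to be the only delicate point: it is precisely the equality $\FPdim(\D_1) = \FPdim(\D_0)$, forced by the index-two hypothesis, that makes $\FPdim(A_1)$ vanish, so the argument does not survive for larger index without further input.
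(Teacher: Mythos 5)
Your proof is correct, and it shares the paper's overall skeleton: decompose $\D = \D_0 \oplus \D_1$ into $\D_0$ and the additive span of the remaining simples, bring in the regular elements $R_0, R_1$, and exploit the equality $\FPdim(\D_1) = \FPdim(\D_0)$ forced by the index-two hypothesis. The pivotal computation, however, is executed differently. The paper multiplies $R_1$ by the full regular object $R$ and identifies $R_1 R_0 = d R_1$ by appealing to the fact (from ENO1) that the regular object of the $\D_0$-module category $\D_1$ is unique up to scalar; this yields $R_1^2 = d R_0$ in one stroke, and hence $\D_1 \ot \D_1 \subseteq \D_0$. You instead multiply $R$ by a single simple $Y \in \D_1$, read off the $\D_0$-component of $R_1 \ot Y$ as $\FPdim(Y)\,R_0$ by comparing graded pieces of the identity $R \ot Y = \FPdim(Y)\,R$, and then kill the $\D_1$-component by the dimension count $\FPdim(A_1) = d\,\FPdim(Y) - d\,\FPdim(Y) = 0$ together with positivity of $\FPdim$. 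Your route is slightly more self-contained: it needs only the standard eigenvector property of the regular element of the fusion ring and positivity, rather than the uniqueness of regular objects of module categories, and it also spells out via adjunction the bimodule property $\D_0 \ot \D_1,\ \D_1 \ot \D_0 \subseteq \D_1$ that the paper asserts without proof. The paper's argument, in exchange, is more compact. Both arguments make clear that the hypothesis enters exactly through $\FPdim(\D_1) = \FPdim(\D_0)$, as you note at the end.
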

\begin{proof}
Let $\D= \D_0\oplus \D_1$ be a decomposition of $\D$ into the sum of $\D_0$ and its direct complement $\D_1$.
Then $\D_1$ is a $\D_0$-bimodule subcategory of $\D$. To prove the statement it suffices to check that the tensor 
product  of $\D$ maps $\D_1\times \D_1$ to $\D_0$. Let  $d:=\FPdim(\D_0)=\FPdim(\D_1)$.  
Let $R$ denote the (virtual) regular object in $\C$. We can write it as $R=R_0+R_1$, where $R_0$
is the regular object of $\D_0$ and $R_1$ is a regular object of the $\D_0$-module category $\D_1$ \cite{ENO1}
such that $\FPdim(R_0)=\FPdim(R_1)=d$.  Then $R_1R= dR =d(R_0+R_1)$. On the other hand,
\[
R_1R = R_1 (R_0+R_1)= R_1R_0 + R_1^2 = dR_1 + R_1^2,
\]
since a regular object of $\D_1$ is unique up to a scalar multiple.
Hence, $R_1^2=dR_0$, which implies that the tensor product of any two objects of $\D_1$ is in $\D_0$.
\end{proof}

Thus, a minimal extension of a slightly degenerate braided fusion category is the same thing as a faithful $\mathbb{Z}/2\mathbb{Z}$-extension which is a non-degenerate braided 
fusion category.


\section{Maximal rank bimodule categories}

In this section, we show that invertible bimodule categories over a fusion category exhibit a rank bound, and that the bimodule categories realizing this bound actually form a subgroup of the Brauer-Picard group. We refer the reader to \cite{ENO2} for definitions and properties of invertible bimodule categories.

\begin{proposition}
\label{rank bound}
Let $\C,\D$ be fusion categories, and $\M$ an invertible $(\C-\D)$-bimodule category. Then $\rank(\M)\le (\rank(\C)\rank(\D))^{\frac{1}{2}}$. In particular, for an invertible $\C-\C$ bimodule category, $\rank(\M)\le \rank(\C)$.
\end{proposition}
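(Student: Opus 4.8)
The plan is to convert the bimodule structure on $\M$ into a pair of commuting representations on the complexified Grothendieck group $V:=K_0(\M)\otimes_{\bZ}\CC$, and to read off the bound from Cauchy--Schwarz applied to a double-centralizer decomposition. Write $\{X_i\}$, $\{Y_j\}$, $\{M_a\}$ for the simple objects of $\C$, $\D$, $\M$, so that $\rank(\C)$, $\rank(\D)$, $\rank(\M)$ are the cardinalities of these index sets. Left tensoring by $X_i$ and right tensoring by $Y_j$ induce nonnegative integer matrices $L_i,R_j\in E:=\End_{\CC}(V)$, with $(L_i)_{ba}=\dim\Hom_{\M}(X_i\ot M_a,M_b)$ and likewise for $R_j$. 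First I would record the structural facts: the bimodule associativity constraint $(X\ot M)\ot Y\cong X\ot(M\ot Y)$ gives $L_iR_j=R_jL_i$; the fusion rules give $L_iL_k=\sum_m N^m_{ik}L_m$, so that $A:=\span_{\CC}\{L_i\}$ and $B:=\span_{\CC}\{R_j\}$ are unital subalgebras of $E$ with $\dim A\le \rank(\C)$ and $\dim B\le \rank(\D)$; and rigidity in $\C$ gives $(L_i)_{ba}=(L_{i^*})_{ab}$, where $X_{i^*}$ is the dual of $X_i$, so that (the entries being real) $A$ is closed under conjugate transposition. Thus $A$ and $B$ are commuting unital $\ast$-subalgebras of $E$, and in particular semisimple.

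The heart of the argument is the claim that invertibility forces $A$ and $B$ to be each other's commutants in $E$, i.e. $A=B'$ and $B=A'$. Granting this, I would fix an isotypic decomposition $V\cong\bigoplus_t W_t^{\oplus m_t}$ of $V$ as a $B$-module, with the $W_t$ pairwise non-isomorphic simple and $n_t:=\dim W_t$. Then $\dim A=\dim B'=\sum_t m_t^2$ and $\dim B=\sum_t n_t^2$, while $\dim V=\rank(\M)=\sum_t m_t n_t$. Cauchy--Schwarz now yields
\[
\rank(\M)^2=\Big(\sum_t m_t n_t\Big)^2\le\Big(\sum_t m_t^2\Big)\Big(\sum_t n_t^2\Big)=\dim A\cdot\dim B\le \rank(\C)\,\rank(\D),
\]
which is exactly the asserted bound after taking square roots; the $\C$-$\C$ statement is the special case $\D=\C$.

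The main obstacle is therefore the double-centralizer claim $A=B'$. The inclusion $A\subseteq B'$ is automatic from commutativity, so the content is the reverse inclusion, and this is precisely where invertibility enters. The natural route is the standard Morita-theoretic fact (see \cite{ENO2}) that an invertible $(\C-\D)$-bimodule category induces equivalences of fusion categories $\C\simeq\Fun_{\D}(\M,\M)$ and $\D\simeq\Fun_{\C}(\M,\M)$; categorically these say exactly that $\C$ and $\D$ are mutual centralizers acting on $\M$. What remains, and what I expect to require the most care, is to verify that these categorical equivalences descend to the linear-algebraic double-centralizer statement on $V$ --- equivalently, that the classes of the simple objects of $\Fun_{\D}(\M,\M)$ span the whole commutant $B'=\End_{K_0(\D)}(V)$, giving $\dim A\ge\dim B'$. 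I would attempt this by realizing a full set of matrix units for each block of $B'$ as (virtual) $\D$-module endofunctors of $\M$, constructing the needed functors from internal Homs; a Frobenius--Perron dimension count, matching $\dim B'=\sum_t m_t^2$ against the number of simple objects of $\Fun_{\D}(\M,\M)$, serves as a consistency check and may furnish an alternative to the explicit construction.
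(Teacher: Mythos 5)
Your reduction of the bound to the double\hyphen centralizer claim is sound linear algebra: the preliminary facts you record (commutativity $L_iR_j=R_jL_i$, the relation $L_{i^*}=L_i^{T}$ and hence semisimplicity of $A$ and $B$, and $\dim A\le\rank(\C)$, $\dim B\le\rank(\D)$) are all correct, and granting $A=B'$ the Cauchy--Schwarz computation does give the proposition. But the claim $A=B'$ --- which, as you say yourself, is the heart of the argument --- is left unproved, and this is a genuine gap, not a routine verification. The categorical statement $\C\simeq\Fun_{\D}(\M,\M)$ does not formally ``descend'' to the linear statement: decategorification does not preserve centralizer-type properties (for instance, $K_0$ does not commute with relative Deligne products: $\Vec\boxtimes_{\Vec(G)}\Vec\simeq\Rep(G)$, whereas $K_0(\Vec)\otimes_{K_0(\Vec(G))}K_0(\Vec)\cong\mathbb{Z}$). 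All that the equivalence $\C\simeq\Fun_{\D}(\M,\M)$ yields is that the span of the $K_0$-matrices of \emph{all} $\D$-module endofunctors of $\M$ is exactly $A$ --- i.e., precisely the inclusion $A\subseteq B'$ you already have. Your proposed repair is therefore not a method but a restatement of the goal: ``realizing the matrix units of $B'$ by virtual $\D$-module endofunctors'' is literally the assertion $B'\subseteq A$, and no construction of functors (from internal Homs or otherwise) can escape the subalgebra $A$. The suggested dimension count fails as well, since $\dim A$ can be strictly smaller than $\rank(\C)=\rank(\Fun_\D(\M,\M))$: the action of $K_0(\C)$ on $K_0(\M)$ need not be faithful (e.g.\ $\M=\Vec$ as an invertible $(\Rep(G),\Vec(G))$-bimodule has $\dim A=1$). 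Finally, note the claim is quantitatively delicate: for the non-invertible indecomposable bimodule $\Rep(\mathbb{Z}_5)$ over $\Rep(D_5)$ mentioned in the paper, $A=B'$ \emph{must} fail (else your own computation would give $5\le 4$), so any proof has to use invertibility in an essentially numerical way, which your sketch never does.

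For comparison, the paper avoids any such statement: it encodes $\M$, as a left $\C$-module category, by its full center, a Lagrangian algebra $L\in\Z(\C)$, observes that $\rank(\M)=\dim\Hom_{\Z(\C)}(I_\C(\mathbb{1}),L)$ and $\rank(\C)=\dim\operatorname{End}_{\Z(\C)}(I_\C(\mathbb{1}))$, and then uses invertibility only once, through the canonical braided equivalence $\alpha:\Z(\C)\to\Z(\D)$ of \cite{ENO2}, which carries $L$ to $I_\D(\mathbb{1})$ and hence gives $\rank(\D)=\dim\operatorname{End}_{\Z(\C)}(L)$; Cauchy--Schwarz applied to the multiplicity vectors of $I_\C(\mathbb{1})$ and $L$ over $\operatorname{Irr}(\Z(\C))$ finishes the proof. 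Your double-centralizer statement is quite plausibly true, but it is strictly stronger than the proposition: under the character-theoretic algebra isomorphisms $K_0(\C)_{\CC}\cong\operatorname{End}_{\Z(\C)}(I_\C(\mathbb{1}))$ and $K_0(\D)_{\CC}\cong\operatorname{End}_{\Z(\C)}(L)$, it amounts to identifying $K_0(\M)_{\CC}$ with $\Hom_{\Z(\C)}(I_\C(\mathbb{1}),L)$ compatibly with pre- and post-composition. In other words, completing your route would require the paper's machinery \emph{plus} an additional equivariance argument, so it cannot be carried out more cheaply than, or independently of, the center/Lagrangian-algebra approach.
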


\begin{proof}

First consider $\M$ as a left $\C$-module category. Then the associated full center provides us with a Lagrangian algebra $L\in \Z(\C)$ \cite{DavydovCentre}. Let $F_{\C}:\Z(\C)\rightarrow \C$ be the forgetful functor, and $I_{\C}$ its adjoint. Then as an algebra in $\C$, $F_{\C}(L)\cong \bigoplus_{M\in \text{Irr}(\M)} \uHom(M,M),$ where the internal hom is taken as a left $\C$ module category. Note that each $\uHom(M,M)$ is a separable, connected algebra, and thus $\dim(\Hom_{\C}(\mathbb{1}, F_{\C}(L))=\rank(\M)$. But we have a canonical isomorphism 
\[
\Hom_{\C}(\mathbb{1}, F_{\C}(L))\cong \Hom_{\Z(\C)}(I_{\C}(\mathbb{1}), L).
\]

However, by \cite{ENO2}, the bimodule category $M$ induces a canonical braided equivalence $\alpha: \mathcal{Z}(\C)\rightarrow \mathcal{Z}(\D)$ such that $\alpha(L)\cong I_{\D}(\mathbb{1})$, thus we have 
\begin{eqnarray*}
 \dim(\operatorname{End}_{\Z(\C)}(I_{\C}(\mathbb{1})))
 &=&\dim(\Hom_{\C}(\mathbb{1}, F_{\C}(I_{\C}(\mathbb{1})) ) )=\rank(\C),\\
 \dim(\operatorname{End}_{\Z(\C)}(L)) &=&\dim(\operatorname{End}_{\Z(\D)}(I_{\D}(\mathbb{1})))=\rank(\D).
 \end{eqnarray*}

Here we have used that as an object $F_{\cC}(I(\mathbb{1}))\cong \bigoplus_{X\in \text{Irr}(\C)} X\otimes X^{*}.$
Therefore by the Cauchy-Schwartz inequality, 
\begin{eqnarray*}
\rank(\M)&=&\dim(\Hom_{\Z(\C)}(I(\mathbb{1}),L)) \\
&=& \sum_{X\in \text{Irr}(\Z(\C))} \dim(\Hom_{\Z(\C)}(I(\mathbb{1}), X)) \dim(\Hom_{\Z(\C)}(L, X))\\
&\le& \dim(\operatorname{End}_{\Z(\C)}(I(\mathbb{1})))^{\frac{1}{2}} \dim(\operatorname{End}_{\Z(\C)}(L)) ^{\frac{1}{2}} \\
&=&(\rank(\C)\rank(\D))^{\frac{1}{2}}.
\end{eqnarray*}
\end{proof}

\begin{remark}
Note the bound $\rank(\M)\le \rank(\C)$ requires invertibility. Consider for example the rank $4$ fusion category $\C=\operatorname{Rep}(D_{5})$,
where $D_5$ is the group of symmetries of the regular pentagon. Then there exists a rank $5$ indecomposable bimodule category, namely $\operatorname{Rep}(\mathbb{Z}_5)$, where the (left and right) actions of $\operatorname{Rep}(D_{5})$ are induced from the restriction functor
(here $\mathbb{Z}_5$ is the subgroup of rotations of $D_5$).
\end{remark}

The above proposition leads us to the following definition.

\begin{definition} We say that an invertible $\C$-bimodule category $\M$ has \textit{maximal rank} if $\rank(\M)=\rank(\C)$.
\end{definition}

\begin{proposition}\label{maxranksubgroup}
Let $\Psi:\operatorname{BrPic}(\C)\rightarrow \operatorname{Aut}_{br}(\mathcal{Z}(\C))$ be the canonical group isomorphism of \cite{ENO2}. Then $\M$ is maximal rank if and only if $\Psi(\M)$ preserves the isomorphism class of the object $I(\mathbb{1})$.
\end{proposition}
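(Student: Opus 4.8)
The plan is to characterize maximal rank in terms of the quantity $\rank(\M) = \dim(\Hom_{\Z(\C)}(I(\be), L))$ established in the proof of Proposition~\ref{rank bound}, where $L = \alpha^{-1}(I(\be))$ and $\alpha = \Psi(\M)$ is the braided autoequivalence of $\Z(\C)$ induced by $\M$. First I would observe that for a $\C$-bimodule category (so $\C = \D$), the bound reads $\rank(\M) \le \rank(\C)$, and that $\rank(\C) = \dim(\End_{\Z(\C)}(I(\be)))$. Thus maximal rank is the statement that the Cauchy--Schwarz inequality used in that proof is an equality.

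The heart of the argument is therefore to analyze the equality case of Cauchy--Schwarz. Writing the two vectors indexed by $X \in \operatorname{Irr}(\Z(\C))$ as $a_X = \dim\Hom_{\Z(\C)}(I(\be), X)$ and $b_X = \dim\Hom_{\Z(\C)}(L, X)$, equality in
\[
\sum_X a_X b_X \le \Bigl(\sum_X a_X^2\Bigr)^{1/2}\Bigl(\sum_X b_X^2\Bigr)^{1/2}
\]
holds precisely when the vectors $(a_X)$ and $(b_X)$ are proportional. Since both are vectors of nonnegative integers with $\sum a_X^2 = \sum b_X^2 = \rank(\C)$, proportionality with equal norms forces $a_X = b_X$ for all $X$. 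Because $a_X$ is the multiplicity of $X$ in $I(\be)$ and $b_X$ is the multiplicity of $X$ in $L = \alpha^{-1}(I(\be))$, having all multiplicities agree means $I(\be)$ and $\alpha^{-1}(I(\be))$ have the same class in the Grothendieck ring. To upgrade this to an honest isomorphism of objects, I would use that $I(\be) = I_\C(\be)$ is the canonical Lagrangian algebra object $\bigoplus_{X \in \operatorname{Irr}(\C)} X \otimes X^*$, whose isotypic decomposition in $\Z(\C)$ is multiplicity-governed: equal multiplicities of simple summands in a semisimple category give an isomorphism. Hence $\alpha^{-1}(I(\be)) \cong I(\be)$, equivalently $\alpha = \Psi(\M)$ preserves the isomorphism class of $I(\be)$.

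For the converse, if $\Psi(\M)$ fixes the class of $I(\be)$ then $L = \alpha^{-1}(I(\be)) \cong I(\be)$, so $a_X = b_X$ for every $X$ and the middle sum computes $\sum_X a_X^2 = \dim\End_{\Z(\C)}(I(\be)) = \rank(\C)$, giving $\rank(\M) = \rank(\C)$. This direction is immediate once the dictionary between $\rank(\M)$, $L$, and $\alpha$ is in place.

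I expect the main obstacle to be the passage from equality of Grothendieck classes to an actual isomorphism of the objects $I(\be)$ and $\alpha^{-1}(I(\be))$, and the careful bookkeeping that the norms $\sum a_X^2$ and $\sum b_X^2$ are \emph{both} equal to $\rank(\C)$ — so that proportional integer vectors of equal length must in fact coincide. The second point relies on $\alpha$ being an equivalence (so it preserves $\dim\End$ and hence the norm of the multiplicity vector of $L$), which is exactly the content of $\dim\End_{\Z(\C)}(L) = \rank(\C)$ recorded in the proof of Proposition~\ref{rank bound} specialized to $\C = \D$. Once these two ingredients are assembled, both implications follow cleanly from the equality analysis of Cauchy--Schwarz.
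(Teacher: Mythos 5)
Your proof is correct and takes essentially the same approach as the paper's: both characterize maximal rank as the equality case of the Cauchy--Schwarz inequality from the proof of Proposition~\ref{rank bound}, use the fact that both multiplicity vectors have squared norm $\rank(\C)$ to force the proportionality constant to be $1$, and then upgrade equality of multiplicities to an isomorphism of objects by semisimplicity of $\Z(\C)$. The only cosmetic difference is that you phrase everything in terms of $L = \alpha^{-1}(I(\be))$ while the paper writes the condition with $\alpha(I(\be))$, which is immaterial since $\alpha$ is an equivalence.
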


\begin{proof}

Returning to the proof of Proposition \ref{rank bound} and identifying $\D$ with $\C$ then $\Psi(\M)=\alpha$, and we are interested in the case when the Cauchy-Schwartz inequality yields equality. But this happens precisely when there exists a scalar $\lambda$ such that
\[
\dim(\Hom_{\Z(\C)}(I(\mathbb{1}), X))=\lambda \dim(\Hom_{\Z(\C)}(\alpha(I(\mathbb{1})), X)).
\]

But 
\begin{eqnarray*}
\rank(\C)&=& \sum_{X\in \text{Irr}(\mathcal{Z}(\C)}\dim(\Hom_{\Z(\C)}(I(\mathbb{1}), X))^{2}\\
&=&\lambda^{2}\sum_{X\in \text{Irr}(\mathcal{Z}(\C)}\dim(\Hom_{\Z(\C)}(\alpha(I(\mathbb{1})), X))^{2}=\lambda^{2}\,\rank(\C).
\end{eqnarray*}

Since the dimension of morphism spaces is non-negative, we see that we must have $\lambda=1$. Thus
\[
\dim(\Hom_{\Z(\C)}(I(\mathbb{1}), X))=\dim(\Hom_{\Z(\C)}(\alpha(I(\mathbb{1})), X))
\]
for all $X\in \text{Irr}(\mathcal{Z}(\C))$ and the conclusion follows.

\end{proof}

\begin{corollary}
The maximal rank invertible bimodule categories form a subgroup of $\operatorname{BrPic}(\C)$.
\end{corollary}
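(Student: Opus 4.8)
The plan is to reduce the statement to the elementary fact that the stabilizer of an object under a group of autoequivalences is a subgroup, and then to transport this across the isomorphism $\Psi$. Since Proposition \ref{maxranksubgroup} has already reexpressed the maximal rank condition as a condition purely about $\operatorname{Aut}_{br}(\Z(\C))$, almost all of the work is done, and the corollary is essentially a formal consequence.

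First I would observe that $\operatorname{Aut}_{br}(\Z(\C))$ acts on the set of isomorphism classes of objects of $\Z(\C)$ via $\alpha \cdot [X] := [\alpha(X)]$. This is a well-defined left action: braided autoequivalences compose, equivalent functors send isomorphic objects to isomorphic objects, and the identity autoequivalence acts trivially. Consequently the set
\[
S := \{\alpha \in \operatorname{Aut}_{br}(\Z(\C)) : [\alpha(I(\mathbb{1}))] = [I(\mathbb{1})]\}
\]
is precisely the stabilizer of the point $[I(\mathbb{1})]$ and is therefore a subgroup of $\operatorname{Aut}_{br}(\Z(\C))$. Concretely, it contains the identity; it is closed under composition, since $[\alpha\beta(I(\mathbb{1}))] = [\alpha(I(\mathbb{1}))] = [I(\mathbb{1})]$ whenever $\alpha,\beta \in S$; and it is closed under inverses, as applying $\alpha^{-1}$ to the relation $[\alpha(I(\mathbb{1}))] = [I(\mathbb{1})]$ gives $[I(\mathbb{1})] = [\alpha^{-1}(I(\mathbb{1}))]$.

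Finally, because $\Psi$ is a group isomorphism, the preimage $\Psi^{-1}(S)$ is a subgroup of $\operatorname{BrPic}(\C)$; and by Proposition \ref{maxranksubgroup} this preimage is exactly the collection of maximal rank invertible $\C$-bimodule categories, completing the proof. There is no genuine obstacle at this stage: the entire difficulty was already absorbed into establishing Proposition \ref{maxranksubgroup}, whose purpose is precisely to recast the analytic "maximal rank'' condition (equality in Cauchy--Schwarz) as the manifestly group-theoretic condition of fixing the isomorphism class of $I(\mathbb{1})$.
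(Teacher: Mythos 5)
Your proof is correct and is precisely the argument the paper intends: Proposition \ref{maxranksubgroup} identifies the maximal rank bimodule categories with $\Psi^{-1}$ of the stabilizer of the isomorphism class of $I(\mathbb{1})$ in $\operatorname{Aut}_{br}(\Z(\C))$, and a stabilizer pulled back along a group isomorphism is a subgroup. The paper leaves this verification implicit (stating the result as an immediate corollary), and your write-up simply makes that routine check explicit.
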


This result seems somewhat surprising, since in general the behavior of the rank of bimodule categories is notoriously difficult to understand under relative tensor products.

Recall there is a canonical subgroup $\operatorname{Out}(\C)\le \operatorname{BrPic}(\C)$ which consists of equivalence classes of invertible bimodule categories which are trivial as a left module category. This implies the right action must be the usual right action twisted by an auto-equivalence of $\C$. More explicitly, let $\beta$ be a tensor autoequivalence of $\C$ and $\C_{\beta}$ the associated bimodule category, which is $\C$ as an underlying category and with actions $X\triangleright Y=X\otimes Y$, $X\triangleleft Y=X\otimes \beta(Y)$, and the obvious associators. The image of these bimodule categories in $\operatorname{BrPic}(\C)$ forms the subgroup $\operatorname{Out}(\C)$.

Using the correspondence between module categories and Lagrangian algebras, we see that this is precisely the subgroup of $\operatorname{BrPic}(\C)$ which preserve $I(\mathbb{1})$ \textit{as an algebra object}. In particular, $\operatorname{Out}(\C)$ forms a subgroup of the maximal rank bimodule categories. In many cases, this is the whole group.

\begin{proposition}\label{pointedfuse}
For any pointed fusion category $\C$, the group of maximal rank bimodule categories is $\operatorname{Out}(\C)$.
\end{proposition}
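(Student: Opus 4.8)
The plan is to combine the characterization of maximal rank bimodules in Proposition~\ref{maxranksubgroup} with the classification of module categories over a pointed fusion category. Write $\C = \Vec_G^\omega$ for a finite group $G$ and $\omega \in H^3(G,k^\times)$, so that every simple object is invertible and $\rank(\C) = |G|$. Since the discussion preceding the statement already places $\Out(\C)$ inside the group of maximal rank bimodule categories, it suffices to prove the reverse inclusion: every maximal rank invertible $\C$-bimodule category lies in $\Out(\C)$.

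So let $\M$ be such a bimodule category and regard it as a left $\C$-module category with associated Lagrangian algebra $L \in \Z(\C)$, as in the proof of Proposition~\ref{rank bound}. The equality case of the Cauchy--Schwarz inequality there, together with the scalar computation in Proposition~\ref{maxranksubgroup}, shows that maximal rank is equivalent to $L \cong I(\be)$ as an \emph{object} of $\Z(\C)$. Applying the forgetful functor $F_\C$ and using that $\C$ is pointed, I would then obtain
\[
\bigoplus_{M \in \text{Irr}(\M)} \uHom(M,M) \;=\; F_\C(L) \;\cong\; F_\C(I(\be)) \;\cong\; \bigoplus_{X \in \text{Irr}(\C)} X \otimes X^{*} \;\cong\; |G|\cdot \be ,
\]
the final isomorphism holding because $X \otimes X^{*} \cong \be$ for every invertible $X$. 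Each $\uHom(M,M)$ is a connected algebra, hence contains $\be$ exactly once and, since the right-hand side is concentrated in degree $e \in G$, is itself concentrated in degree $e$. Therefore $\uHom(M,M) \cong \be$ for every simple $M$, and counting copies of $\be$ gives $|\text{Irr}(\M)| = |G|$.

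The condition $\uHom(M,M) \cong \be$ means precisely that the stabilizer of the isomorphism class of $M$, under the action of the invertible objects of $\C$ by left tensoring, is trivial. With exactly $|G|$ simple objects and every stabilizer trivial, the group $G = \text{Irr}(\C)$ acts simply transitively on $\text{Irr}(\M)$; choosing a base point then identifies $\M$ with the regular module category $\C$, with no residual cocycle ambiguity since $H^2(\{e\},k^\times) = 0$. Thus $\M$ is trivial as a left $\C$-module category, which is exactly the defining condition for membership in $\Out(\C) \le \BrPic(\C)$. Combined with the inclusion $\Out(\C) \subseteq \{\text{maximal rank}\}$, this yields the asserted equality of subgroups.

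I expect the main obstacle to be making the final structural step airtight: that a $\Vec_G^\omega$-module category all of whose simple objects have trivial internal endomorphism algebra, and which has exactly $|G|$ of them, is equivalent to the regular module category. This is the orbit--stabilizer argument above, and it is exactly the point at which one may instead invoke Ostrik's classification of the indecomposable $\C$-module categories as the $\M(H,\psi)$ together with the rank formula $\rank \M(H,\psi) = [G:H]$; maximal rank then forces $H = \{e\}$. Invertibility of $\M$ enters only to ensure that $\M$ is indecomposable, so that the orbit argument (equivalently, the classification) applies, and that, once shown left-trivial, $\M$ indeed represents a class in $\Out(\C)$.
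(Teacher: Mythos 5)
Your proposal is correct, but it takes a genuinely different route from the paper. The paper's proof is a one-step application of Ostrik's classification of module categories over $\operatorname{Vec}(G,\omega)$: indecomposable module categories correspond to pairs $(H\le G,\psi)$ with $\psi$ a trivialization of $\omega|_H$, and have rank $[G:H]$; maximal rank forces $H=\{e\}$, so the underlying left module category is the regular one and the bimodule category lies in $\operatorname{Out}(\C)$ by definition. You instead run the Lagrangian-algebra machinery of Propositions~\ref{rank bound} and \ref{maxranksubgroup}: from $L\cong I(\be)$ as objects you compute $F_\C(L)\cong |G|\cdot\be$, use connectedness of each $\uHom(M,M)$ to conclude all internal ends are trivial (equivalently, the invertible objects act freely on $\text{Irr}(\M)$), deduce simple transitivity by counting, and then identify $\M\cong \operatorname{Mod}_\C(\uHom(M,M))=\operatorname{Mod}_\C(\be)=\C$. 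This avoids the pointed-specific $(H,\psi)$ classification, needing only the general theorem that an indecomposable module category is the category of modules over any internal end, and it yields slightly more structural information (freeness of the action of $\text{Irr}(\C)$). On the other hand, your detour through Proposition~\ref{maxranksubgroup} is not actually necessary: maximal rank already gives $|\text{Irr}(\M)|=|G|$ by definition, and invertibility forces $\M$ to be indecomposable as a left module category, hence the $G$-action on $\text{Irr}(\M)$ is transitive, which with $|G|$ simples forces trivial stabilizers directly; so the argument could be streamlined. Both proofs are valid; the paper's is shorter, yours is more self-contained and closer in spirit to the equality-case analysis that motivates the notion of maximal rank.
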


\begin{proof}
Any pointed fusion category $\C$ is monoidally equivalent to $\operatorname{Vec}(G,\omega)$ for a finite group $G$ and $3$-cocycle $\omega\in \text{Z}^{3}(G, \mathbb{C}^{\times})$. By \cite{OstrikModule}, the module categories for this fusion category are classified by subgroups $H\le G$ together with a trivialization of $\omega|_{H}$. The rank of the resulting module category is the index $[G:H]$. Thus there is a unique rank $|G|$ indecomposable module category, where $H=\{e\}$, which is $\operatorname{Vec}(G,\omega)$ acting on itself. The dual category is thus $\operatorname{Vec}(G,\omega)$, hence any invertible rank $|G|$ bimodule category is of the form $\operatorname{Out}(\C)$.
\end{proof}

There exist maximal rank invertible bimodule categories that are not of the form $\operatorname{Out}(\C)$. One such example is constructed by Ostrik in the appendix of \cite{CMS} using an extension of the Izumi-Xu fusion category.
See \cite[Theorem A.5.1]{CMS} and \cite[Remark 2.19 and Example 2.20]{OstrikPivotal}.

To find a maximal rank bimodule category not of the form $\operatorname{Out}(\C)$, we need not only a distinct etale algebra structure on $I(\mathbb{1})$, but we need this algebra structure to be the image of $I(\mathbb{1})$ under a braided autoequivalence, which makes finding invertible bimodule categories not of the form $\operatorname{Out}(\C)$ difficult in general. 

To find such examples, we move in a different direction. If $\C$ is braided, we can try to understand invertible module categories over $\C$. Recall from \cite[Remark 2.13]{DN1}  that we can characterize the bimodule categories $\M\in \BrPic(\C)$ which are in the image of the map from $\operatorname{Pic}(\C)$ as the one-sided bimodule categories. By definition, these are bimodule categories for which there exists natural isomorphisms $d_{M,X}: M\triangleleft X\cong X\triangleright M$ satisfying a collection of coherences. It is not hard to see that these coherences imply the only one-sided invertible bimodule category which is trivial as a left module category is the trivial bimodule category $\C$. Thus all nontrivial maximal rank invertible module categories are not of the form $\operatorname{Out}(\C)$ and thus provide interesting examples.

We will now provide a characterization of maximal rank invertible module categories for non-degenerate fusion categories in terms of braided autoequivalences. In \cite{D}, Davydov introduced the notion of a \textit{soft} monoidal functor, which is simply a monoidal functor which is isomorphic to the identity functor as a linear functor. Equivalently, a soft monoidal functor is one which fixes equivalence classes of objects.

Recall from \cite{ENO2},\cite[Section 2.9]{DN1}, $\alpha$-induction provides us with an isomorphism $\partial:\operatorname{Pic}(\C)\rightarrow \operatorname{Aut}^{br}(\C)$. The following result is originally due to Kirillov Jr \cite{Kir} (see also \cite[Section II.3]{HQFT}) in the case of modular categories.

\begin{proposition}
If $\C$ is a non-degenerate braided fusion category and $\M$ is an invertible module category, the rank of $\M$ is the number of equivalence classes of simple objects fixed by $\partial(\M)$. In particular, the image of the group of maximal rank invertible module categories is the group of soft braided tensor autoequivalences of $\C$.
\end{proposition}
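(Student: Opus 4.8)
The plan is to reduce the statement to a fixed-point count inside the Drinfeld center, reusing the computation from the proof of Proposition~\ref{rank bound}. Since $\C$ is non-degenerate, there is a canonical braided equivalence $\Z(\C)\simeq \C\boxtimes\bar{\C}$ under which the induced algebra $I(\mathbb{1})$ becomes $\bigoplus_{X\in\text{Irr}(\C)} X\boxtimes X^{*}$; this is consistent with $F_{\C}(I(\mathbb{1}))\cong\bigoplus_X X\otimes X^{*}$, as the forgetful functor is the tensor product functor $X\boxtimes Y\mapsto X\otimes Y$. Identifying $\D$ with $\C$ in Proposition~\ref{rank bound} and writing $\alpha=\Psi(\M)$, the computation there gives $L\cong\alpha^{-1}(I(\mathbb{1}))$, so that
\[
\rank(\M)=\dim\Hom_{\Z(\C)}(I(\mathbb{1}),L)=\dim\Hom_{\Z(\C)}(\alpha(I(\mathbb{1})),I(\mathbb{1})).
\]
Everything thus comes down to understanding $\alpha(I(\mathbb{1}))$.

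The key step is to pin down the braided autoequivalence $\alpha=\Psi(\M)$ in terms of $g:=\partial(\M)\in\operatorname{Aut}^{br}(\C)$. I would show that, under the identification $\Z(\C)\simeq\C\boxtimes\bar{\C}$, the image of $\operatorname{Pic}(\C)$ in $\operatorname{Aut}_{br}(\Z(\C))$ consists of autoequivalences acting only on the first tensorand, with $\Psi(\M)$ corresponding to $g\boxtimes\operatorname{id}_{\bar{\C}}$; this is the compatibility between the isomorphism $\partial$ arising from $\alpha$-induction and the isomorphism $\Psi$ of \cite{ENO2}, and it can be extracted from \cite{ENO2} and \cite{DN1}. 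The shape is forced: the alternative diagonal action $g\boxtimes\bar{g}$ would fix $I(\mathbb{1})$ for \emph{every} $g$, contradicting Proposition~\ref{maxranksubgroup} (and the precise choice among $g\boxtimes\operatorname{id}$, $\operatorname{id}\boxtimes\bar{g}$, etc.\ is immaterial below, since these yield the same fixed-point set). Granting this, $\alpha(I(\mathbb{1}))\cong\bigoplus_{X\in\text{Irr}(\C)} g(X)\boxtimes X^{*}$.

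With this in hand the rank computation is immediate. Using that $\dim\Hom_{\C}(g(X),Y)=\delta_{g(X),Y}$ and $\dim\Hom_{\bar{\C}}(X^{*},Y^{*})=\delta_{X,Y}$ on simple objects, one obtains
\[
\rank(\M)=\sum_{X,Y\in\text{Irr}(\C)}\dim\Hom_{\C}(g(X),Y)\,\dim\Hom_{\bar{\C}}(X^{*},Y^{*})=\#\{X\in\text{Irr}(\C): g(X)\cong X\},
\]
which is exactly the number of isomorphism classes of simple objects fixed by $\partial(\M)$.

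Finally, for the ``in particular'' clause: $\M$ has maximal rank iff this fixed-point count equals $\rank(\C)$, i.e.\ iff $g=\partial(\M)$ fixes every isomorphism class of simple object, which (passing to direct sums, by additivity of $g$) is equivalent to $g$ being isomorphic to the identity as a linear functor, that is, soft. Since $\partial$ is a group isomorphism $\operatorname{Pic}(\C)\xrightarrow{\sim}\operatorname{Aut}^{br}(\C)$ and the soft braided autoequivalences visibly form a subgroup, the image of the subgroup of maximal rank invertible module categories is precisely the group of soft braided tensor autoequivalences. The main obstacle is the middle step—establishing the exact form of $\Psi(\M)$ relative to $\partial(\M)$—since it requires carefully matching the two \emph{a priori} different constructions of these isomorphisms, one via the full center and one via $\alpha$-induction.
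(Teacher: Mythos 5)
Your proposal is correct and follows essentially the same route as the paper: both reduce $\rank(\M)$ to $\dim\Hom_{\Z(\C)}(I(\mathbb{1}),\Psi(\M)(I(\mathbb{1})))$ via the Lagrangian-algebra correspondence from Proposition~\ref{rank bound}, and then use the compatibility between $\Psi$ and $\partial$ under $\Z(\C)\cong\C\boxtimes\C^{\rev}$ to turn this into a fixed-point count. The key step you flag as the main obstacle is exactly what the paper handles by citing \cite[Lemma 4.4]{DN1}, which gives $\Psi(\M)=\operatorname{Id}_{\C}\boxtimes\,\partial$ (acting on the second factor rather than the first --- immaterial for the count, as you observe), so your only weakness, the heuristic ``forcing'' argument, is superseded by that reference.
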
 

\begin{proof}
$\M$ induces a braided autoequivalence of $\Psi(\M)\in \mathcal{Z}(\C)$, which by \cite[Lemma 4.4]{DN1} is $\operatorname{Id}_{\C}\boxtimes \partial$, acting on $\mathcal{Z}(\C)\cong \C\boxtimes \C^{\rev}$. But 

$$I(\mathbb{1})\cong \bigoplus_{X\in \text{Irr}(\C)} X\boxtimes X^{*}$$ hence $$\Psi(\M)(I(\mathbb{1}))=\bigoplus_{X\in \text{Irr}(\C)} X\boxtimes \partial(\M)(X^{*}).$$ 

\noindent Thus $\operatorname{rank}(\M)=\dim(\operatorname{Hom}_{\C\boxtimes \C^{\rev}}( I(\mathbb{1}),\Psi(\M)(I(\mathbb{1})) ) )$ is precisely the number of fixed points of $\partial(\M)$ acting on $\text{Irr}(\C)$.
\end{proof}

Davydov \cite{D} has computed the group of soft braided autoequivalences for the non-degenerate braided tensor category $\mathcal{Z}(\operatorname{Vec}(G))$ for finite groups $G$. The answer is somewhat involved, but he shows it is a certain subgroup of the image of $\operatorname{Out}(\operatorname{Vec}(G))\cong H^{2}(G, \mathbb{C}^{\times})\rtimes\operatorname{Out}(G)$ inside $\operatorname{Aut}^{br}(\mathcal{Z}(\Vec(G)))$ satisfying a compatibility condition with respect to double class functions \cite{D}, Theorem 2.12. He then presents several examples which have non-trivial soft braided autoequivalences, the smallest of which has order 64, though there may certainly be smaller examples. In any case, these provide examples of non-trivial maximal rank invertible module categories.

\section{Rank finiteness for braided fusion categories}

The rank finiteness theorem for modular categories was proved in \cite{BNRZ}.
It states that up to a braided equivalence there exists only finitely many modular categories
of any given rank. Below we extend this result  to braided fusion categories that are not
necessarily spherical or non-degenerate. The plan is first to establish this result for non-degenerate
and slightly degenerate categories and then  pass to equivariantizations.

\begin{corollary}\label{graded rank}
Let $\C =\oplus_{a\in A}\, \C_a$ be a fusion category faithfully graded by 
a group $A$. Then $\rank(\C) \leq |A| \rank(\C_e)$.
\end{corollary}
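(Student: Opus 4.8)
The plan is to reduce this to the bimodule rank bound already established in Proposition~\ref{rank bound}. The starting observation is that the simple objects of $\C$ are partitioned according to the grading: every simple object lies in exactly one component $\C_a$, so that
\[
\rank(\C) = \sum_{a\in A} \rank(\C_a).
\]
Thus it suffices to bound each $\rank(\C_a)$ by $\rank(\C_e)$.

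The key structural input I would invoke is that a faithful grading of a fusion category makes each graded component an \emph{invertible} $\C_e$-bimodule category. Concretely, the tensor product of $\C$ restricts to functors $\C_a \times \C_b \to \C_{ab}$, and in particular each $\C_a$ is a $(\C_e,\C_e)$-bimodule category via left and right multiplication. Faithfulness of the grading (all components nonzero) guarantees that the multiplication functors $\C_a \ot_{\C_e} \C_{a^{-1}} \to \C_e$ are equivalences, so each $\C_a$ is invertible as a $\C_e$-bimodule category; this is exactly the content of the correspondence between faithful $A$-gradings and homomorphisms $A \to \BrPic(\C_e)$ from \cite{ENO2}. I would state this as the one nontrivial ingredient, citing \cite{ENO2} rather than reproving it.

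With this in hand the corollary is immediate: applying the invertible-bimodule case of Proposition~\ref{rank bound} (namely $\rank(\M) \le \rank(\C_e)$ for any invertible $\C_e$-bimodule category $\M$) to each $\M = \C_a$ gives $\rank(\C_a) \le \rank(\C_e)$ for every $a \in A$. Summing over the $|A|$ components yields
\[
\rank(\C) = \sum_{a\in A} \rank(\C_a) \le \sum_{a\in A} \rank(\C_e) = |A|\,\rank(\C_e),
\]
as desired.

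There is no real obstacle here beyond correctly citing the invertibility of the graded components; the analytic work (the Cauchy--Schwarz estimate on morphism spaces in the Drinfeld center) has already been carried out in Proposition~\ref{rank bound}. The only point demanding a moment's care is confirming that the relevant bimodule structure is the two-sided $\C_e$-action, so that the $\C$-$\C$ form of the rank bound applies with $\C$ replaced by $\C_e$; once that identification is made the inequality follows by a one-line summation.
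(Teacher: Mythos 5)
Your proof is correct and follows exactly the paper's route: the paper likewise observes that the components $\C_a$ of a faithful grading are invertible $\C_e$-bimodule categories (via \cite{ENO2}) and then applies Proposition~\ref{rank bound} to each, summing over the $|A|$ components. You have merely spelled out the partition of simple objects and the summation, which the paper leaves implicit in its one-line proof.
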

\begin{proof}
The components $\C_a$ are invertible $\C_e$-bimodule categories so this is immediate from Proposition \ref{rank bound}.
\end{proof}

\begin{lemma}
\label{equivariant rank}
Let $\C$ be a fusion category and let $G$ be a finite group acting on $G$. Then 
\[
\frac{1}{|G|} \rank (\C) \leq \rank(\C^G) \leq |G| \rank(\C).
\]
\end{lemma}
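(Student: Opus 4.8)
The plan is to bound $\rank(\C^G)$ above and below by comparing the equivariantization $\C^G$ with a crossed product construction. I will use two standard structural facts about equivariantization. First, there is a well-known formula relating the simple objects of $\C^G$ to $G$-orbits on $\mathrm{Irr}(\C)$ together with projective representations of stabilizers; concretely, each $G$-orbit $\O$ of simple objects of $\C$ contributes to $\mathrm{Irr}(\C^G)$ a number of simple objects equal to the number of irreducible $\alpha$-projective representations of the stabilizer $G_X$ (for $X \in \O$), where $\alpha$ is a cocycle determined by the $G$-action. Second, and more usefully for getting clean bounds, I will invoke the relationship between $\C^G$ and the crossed product (equivariantization's adjoint) $\C \rtimes G$, sometimes called the $G$-crossed product or smash product category, which is a faithful $G$-graded fusion category with trivial component $\C$.

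\textbf{Upper bound.} The cleaner of the two inequalities is $\rank(\C^G) \le |G|\,\rank(\C)$. Here I would use the fact from the orbit formula that the number of simple objects of $\C^G$ lying over a given orbit $\O$ is $\sum_{i} 1 = (\text{number of projective irreps of the stabilizer})$, and that the total dimension count $\sum_i (\dim \rho_i)^2 = |G_X|$ forces the number of such irreps to be at most $|G_X| \le |G|$. Summing over orbits $\O$, and noting that the number of orbits is at most $\rank(\C)$, gives $\rank(\C^G) \le |G| \cdot \rank(\C)$. Alternatively, and perhaps more transparently, the forgetful functor $\C^G \to \C$ sends each simple object of $\C^G$ to a nonzero object of $\C$, and its adjoint (free functor) $\C \to \C^G$ has the property that every simple of $\C^G$ appears in the image of some simple of $\C$; the multiplicity bound $\dim\Hom_{\C^G}(\mathrm{Free}(X), \mathrm{Free}(X))$ is governed by $|G|$, again yielding the factor $|G|$.

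\textbf{Lower bound.} For $\frac{1}{|G|}\rank(\C) \le \rank(\C^G)$, I would run the symmetric argument with the roles reversed, using the fact that $\C$ can be recovered (up to Morita-type equivalence) from $\C^G$ via the de-equivariantization, or equivalently that $(\C^G) \rtimes G$ contains $\C$ as a component. The most efficient route is to apply the already-proven upper bound to the reverse construction: $\C$ is the equivariantization-type object sitting inside a $G$-graded category built from $\C^G$, so $\rank(\C) \le |G|\,\rank(\C^G)$, which rearranges to the desired lower bound. Concretely, I would invoke that $\C \rtimes G$ is faithfully $G$-graded with trivial component $\C$ and $(\C \rtimes G)^G \cong \C \boxtimes \Rep(\ldots)$-type decomposition, or simply use that de-equivariantization $\C \simeq (\C^G)_G$ together with Corollary~\ref{graded rank} applied to the $G$-grading on $(\C^G) \rtimes G$ gives $\rank(\C) \le |G| \rank(\C^G)$.

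\textbf{Main obstacle.} The routine calculations are the orbit-stabilizer bookkeeping; the genuine subtlety is making the lower bound precise without circularity, since equivariantization and de-equivariantization are inverse to each other only on categories equipped with the appropriate $\Rep(G)$-structure. I expect the cleanest presentation bundles both inequalities into a single symmetric lemma: the category $\C \rtimes G$ is faithfully $G$-graded with trivial component equivalent to $\C$, while $(\C \rtimes G)^G$ contains $\C^G$, so Corollary~\ref{graded rank} applied in both directions yields $\rank(\C) \le |G|\rank(\C^G)$ and $\rank(\C^G) \le |G|\rank(\C)$ simultaneously. The delicate point to verify carefully is precisely which $G$-graded category plays the role of $\C$ in Corollary~\ref{graded rank} for each direction, and I would want to state explicitly that $\rank(\C \rtimes G) = \rank(\C^G)$ (both count the same orbit data, reflecting the crossed-product/equivariantization duality) before the grading bound is applied.
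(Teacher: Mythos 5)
Your upper bound is correct and is exactly the paper's argument: there are at most $\rank(\C)$ orbits of $G$ on $\operatorname{Irr}(\C)$, and each stabilizer $G_X$ has at most $|G_X|\le |G|$ irreducible $\alpha$-projective representations since their squared dimensions sum to $|G_X| = \dim k_\alpha[G_X]$. The gap is in your lower bound, and it is ironic because the lower bound is the \emph{easier} direction given the parameterization you already accept: the $G$-orbits partition $\operatorname{Irr}(\C)$ into sets of size at most $|G|$, so there are at least $\rank(\C)/|G|$ orbits, and \emph{every} orbit carries at least one simple object of $\C^G$, because the twisted group algebra $k_\alpha[G_X]$ is a nonzero semisimple algebra and hence has at least one irreducible representation. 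Thus $\rank(\C^G)\ge \#\{\text{orbits}\}\ge \rank(\C)/|G|$, with no duality, crossed product, or de-equivariantization needed. This is the paper's proof.

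Your proposed route for the lower bound does not assemble into a proof, for three concrete reasons. First, the identity you say you would ``state explicitly,'' namely $\rank(\C\rtimes G)=\rank(\C^G)$, is false: as an abelian category $\C\rtimes G\cong \bigoplus_{g\in G}\C$, so $\rank(\C\rtimes G)=|G|\,\rank(\C)$; already for $\C=\Vec$ with the trivial action of a nonabelian $G$ one gets $\rank(\Vec_G)=|G|$ while $\rank(\Rep(G))$ is the number of conjugacy classes. Rank is not a Morita invariant, which is precisely why the crossed-product/equivariantization duality cannot transport rank information. Second, ``applying the already-proven upper bound in reverse'' would require a category $\E$ with a $G$-action such that $\C$ embeds in $\E^G$ and $\rank(\E)\le\rank(\C^G)$; de-equivariantization supplies no such $\E$, since $\C\cong(\C^G)_G$ exhibits $\C$ as a \emph{de}-equivariantization, not an equivariantization, of $\C^G$, and these are dual constructions, not instances of one another (for nonabelian $G$ there is not even a canonical $G$-action on $\C^G$, so the expression ``$(\C^G)\rtimes G$'' is undefined; what $\C^G$ carries canonically is a central copy of $\Rep(G)$). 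Third, applying Corollary~\ref{graded rank} to the graded category that does exist, $\C\rtimes G$, yields $\rank(\C\rtimes G)\le |G|\,\rank(\C)$, an inequality in which $\rank(\C^G)$ never appears. If you prefer a functorial proof of the lower bound over the counting one, the correct version is: every simple $X\in\C$ is a direct summand of $U(Z)$ for some simple $Z\in\C^G$, where $U:\C^G\to\C$ is the forgetful functor (by adjunction with $\Ind$, since $\Ind(X)\ne 0$), and the simple summands of $U(Z)$ form a single $G$-orbit, hence number at most $|G|$ up to isomorphism; this gives $\rank(\C)\le |G|\,\rank(\C^G)$ directly.
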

\begin{proof}
Simple objects of $\C^G$ are parameterized by pairs consisting of orbits of simple objects of $\C$ under the action of $G$ 
and certain irreducible projective representations of stabilizers. Each orbit has at  most $|G|$ elements,
so the number of orbits is at least $\rank(\C)/|G|$. This implies the first inequality.

On the other hand, there are at most $\rank(\C)$ orbits and each stabilizer has at most $|G|$ irreducible
projective representations, which gives the second inequality.
\end{proof}

\begin{proposition}
\label{nd fin}
There are finitely many equivalence classes of non-degenerate braided fusion categories of any given rank.
\end{proposition}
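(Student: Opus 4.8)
The plan is to reduce the statement to the rank-finiteness theorem for modular categories \cite{BNRZ} by passing to the Drinfeld center. Let $\C$ be a non-degenerate braided fusion category of rank $n$. Since $\C$ is non-degenerate, there is a canonical braided equivalence $\Z(\C) \cong \C \boxtimes \C^{\rev}$, so $\Z(\C)$ has rank $n^2$. The essential input at this stage is that the center of an arbitrary fusion category is \emph{modular}: $\Z(\C)$ carries a canonical spherical structure with respect to which it is a modular category, even though $\C$ itself is not assumed to be spherical. With this in hand, $\Z(\C)$ is a modular category of rank $n^2$, and \cite{BNRZ} shows that there are, up to braided equivalence, only finitely many such. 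Hence there are only finitely many possibilities for the braided fusion category $\Z(\C)$.

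It then remains to bound, for each fixed $\Z(\C)$, the number of $\C$ that can arise. For this I would use that $\C$ is realized as a braided fusion subcategory of its own center: under the equivalence above, the assignment $X \mapsto X \boxtimes \be$ is a braided embedding $\C \hookrightarrow \Z(\C)$, and the braided structure of $\C$ is precisely the one inherited from $\Z(\C)$. Now any fusion category has only finitely many fusion subcategories, since each is determined by the subset of isomorphism classes of simple objects it contains, so there are at most $2^{n^2}$ of them inside $\Z(\C)$. Each such subcategory inherits a single braided structure from $\Z(\C)$, so only finitely many braided fusion categories arise as braided subcategories of a fixed $\Z(\C)$. Combining this with the finiteness of the possibilities for $\Z(\C)$ gives finitely many $\C$ up to braided equivalence.

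The main obstacle is the first step: establishing that $\Z(\C)$ is genuinely modular, i.e. that it admits a spherical structure, \emph{without} assuming sphericity of $\C$. This is exactly the point that lets the argument bypass the open question of whether every fusion category is spherical, and it is where the weaker non-degenerate hypothesis (as opposed to the modular hypothesis of \cite{BNRZ}) must be handled with care. A secondary point to verify is that the canonical embedding $\C \hookrightarrow \Z(\C)$ recovers the full braided structure of $\C$, and not merely its fusion ring; this is what allows the count of fusion subcategories of $\Z(\C)$ to control braided equivalence classes of $\C$ rather than only their Grothendieck rings. One could alternatively finish by invoking Ocneanu rigidity together with the finiteness of braidings on a fixed fusion ring, but extracting the braiding directly from the subcategory structure of $\Z(\C)$ seems cleaner and avoids any separate count of braidings.
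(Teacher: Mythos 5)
The first step of your argument has a fatal gap: the ``essential input'' that $\Z(\C)$ is modular --- i.e.\ that it carries a spherical structure --- for an arbitrary non-spherical fusion category $\C$ is not a known theorem. Without sphericity of $\C$, what is known is only that $\Z(\C)$ is a \emph{non-degenerate} braided fusion category; modularity additionally requires a spherical (ribbon) structure, and this is precisely what \cite{BNRZ} needs, since its number-theoretic arguments run through the $S$- and $T$-matrices. Worse, in your setting the missing input is \emph{equivalent} to the open problem you claim to bypass: since $\C$ is braided, $X\mapsto (X,c_{-,X})$ embeds $\C$ into $\Z(\C)$ as a full fusion subcategory closed under duals, and a pivotal (resp.\ spherical) structure on a fusion category always restricts to such a subcategory. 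Hence a spherical structure on $\Z(\C)\cong\C\boxtimes\C^{\rev}$ would induce one on $\C$, so asserting that $\Z(\C)$ is modular is the same as asserting that $\C$ itself is spherical --- an instance of the open pivotality conjecture of \cite{ENO1}. Your argument therefore does not bypass the open question; it assumes it.

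The paper's proof is built exactly to avoid this point. It replaces $\C$ by its sphericalization $\tilde{\C}$ \cite{ENO1}: a ribbon (hence spherical, though degenerate) category of rank $2N$, realized as a $\mathbb{Z}/2\mathbb{Z}$-equivariantization of $\C$. Because $\tilde{\C}$ is spherical, its center $\Z(\tilde{\C})$ \emph{is} modular, and by \cite{GNN} it is $\mathbb{Z}/2\mathbb{Z}$-graded with trivial component $\Z(\C)^{\mathbb{Z}/2\mathbb{Z}}$; Corollary~\ref{graded rank} and Lemma~\ref{equivariant rank} then bound its rank by $4N^2$. Since $\C$ is a quotient of $\tilde{\C}$, it is a subquotient of the modular category $\Z(\tilde{\C})$, and \cite{BNRZ} gives finiteness for subquotients of modular categories of bounded rank. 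Incidentally, the second half of your proposal --- that a fixed braided fusion category has only finitely many fusion subcategories, each inheriting a unique braiding --- is sound and plays the same role as the paper's subquotient count; but it cannot repair the first step, which is where the real content of the proposition lies.
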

\begin{proof}
Let $N$ be a positive integer. By \cite{BNRZ}, it suffices to show that there is a positive integer $M$ such that any non-degenerate braided fusion category $\C$ 
of rank $N$ is a subquotient of  a modular  category of rank $\leq M$.
Here by a subquotient we mean a surjective image of a subcategory.
Let $\tilde{\C}$ be the sphericalization of $\C$ \cite{ENO1}. It is a degenerate ribbon category
(its symmetric center is $\Rep(\mathbb{Z}/2\mathbb{Z})$ with  a non-unitary ribbon structure) of rank $2N$. 

As $\tilde{\C}$ is a $\mathbb{Z}/2\mathbb{Z}$-equivariantization
of $\C$,
its center $\Z(\tilde{\C})$ is a 
$\mathbb{Z}/2\mathbb{Z}$-graded modular category with the trivial component $\Z(\tilde{\C})_0= \Z(\C)^{\mathbb{Z}/2\mathbb{Z}}$ by \cite{GNN}.
Using Corollary~\ref{graded rank} and Lemma \ref{equivariant rank} we estimate
\[
\rank(\Z(\tilde{\C})) \leq 2\, \rank (\Z(\tilde{\C})_0) = 2\, \rank (\Z({\C})^{\mathbb{Z}/2\mathbb{Z}}) \leq 4\, \rank( \Z({\C}))  = 4N^2,
\]
so one can take $M = 4N^2$. Indeed, $\C$ is a quotient of $\tilde{\C}$ and so is a subquotient of $\Z(\tilde{\C})$. 
\end{proof}

Let $\C_1,\,\C_2$ be braided fusion categories with embeddings $\sVec \hookrightarrow \Z_{sym}(\C_i),\, i=1,2$.
Then $\C_1 \bt_{\sVec} \C_2$  has a canonical structure of a braided fusion category \cite{DNO}. 
Namely, it is equivalent to the category of $A$-modules in
$\B_1 \bt \B_2$, where $A$ is the regular algebra of the maximal Tannakian subcategory of $\sVec \bt \sVec \subset \C_1 \bt \C_2$.
If $\C_1$ and $\C_2$ are slightly degenerate then so is $\C_1 \bt_{\sVec} \C_2$.

\begin{proposition}
\label{sd fin}
There are finitely many equivalence classes of  slightly degenerate braided fusion categories of any given rank.
\end{proposition}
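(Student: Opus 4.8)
The plan is to reduce the slightly degenerate case to the non-degenerate case (Proposition~\ref{nd fin}) by passing to the Drinfeld center, mimicking the structure of the proof of Proposition~\ref{nd fin}. Let $\C$ be a slightly degenerate braided fusion category of rank $N$; recall that $N$ is automatically even. My first step would be to bound the rank of the center $\Z(\C)$ purely in terms of $N$. Since $\C$ is a fusion category of rank $N$, the rank of $\Z(\C)$ is already controlled: $\rank(\Z(\C)) \le (\rank \C)^2 = N^2$ by the standard fact that $I(\mathbb{1}) \cong \bigoplus_{X \in \mathrm{Irr}(\C)} X \otimes X^*$, whose endomorphism algebra has dimension $\rank(\C)$, so $I(\mathbb{1})$ decomposes into at most $\rank(\C)$ distinct simple summands, each of which must appear among the simples of $\Z(\C)$ that pair nontrivially with the unit --- more carefully, one uses that every simple object of $\Z(\C)$ is a subobject of $I(Y)$ for some simple $Y \in \C$, giving $\rank(\Z(\C)) \le (\rank \C)^2$.

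The key structural input, which is the analogue of the \cite{GNN} computation used in Proposition~\ref{nd fin}, is to understand $\Z(\C)$ when $\C$ is slightly degenerate. The idea is that $\C$ being slightly degenerate means $\Z_{sym}(\C) = \sVec$, and one can relate $\Z(\C)$ to a non-degenerate braided fusion category. Concretely, the plan is to exhibit $\C$ (or a closely related category of the same rank, up to a bounded factor) as a subquotient of a non-degenerate braided fusion category whose rank is bounded by a function of $N$ alone. The cleanest route is to use Lemma~\ref{Z2 grad}: a minimal extension of $\C$, when it exists, is a faithful $\mathbb{Z}/2\mathbb{Z}$-extension $\tilde\C$ that is non-degenerate, with $\rank(\tilde\C) \le 2N$ by Corollary~\ref{graded rank}. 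Since there are finitely many non-degenerate braided fusion categories of rank $\le 2N$ by Proposition~\ref{nd fin}, and each contains only finitely many fusion subcategories, finiteness for $\C$ would follow --- \emph{provided} every slightly degenerate $\C$ admits a minimal extension.

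The main obstacle is precisely that existence of minimal non-degenerate extensions is an open problem, flagged explicitly in the introduction. So I cannot route through $\tilde\C$ directly. Instead, the plan is to avoid minimal extensions entirely and work with the center $\Z(\C)$, which always exists and is always non-degenerate (in fact modular after sphericalization). Following the template of Proposition~\ref{nd fin}, I would sphericalize $\C$ to get $\tilde\C$ of rank $2N$, observe that $\Z(\tilde\C)$ is a modular category whose rank is bounded via Corollary~\ref{graded rank} and Lemma~\ref{equivariant rank} by $4\,\rank(\Z(\C)) \le 4N^2$, and note that $\C$ is a subquotient of $\Z(\tilde\C)$ (it is a quotient of $\tilde\C$, which embeds as a subquotient of its own center). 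Then $M = 4N^2$ works, and finiteness follows from \cite{BNRZ} exactly as before. The only genuinely new point needed is the bound $\rank(\Z(\C)) \le N^2$, and the hard part is checking that the identical sphericalization-and-center argument from Proposition~\ref{nd fin} survives without the non-degeneracy hypothesis on $\C$ --- in fact, inspecting that proof, non-degeneracy of $\C$ is never used once one has the rank bound on $\Z(\C)$, so the argument transfers essentially verbatim. This suggests the proofs of Propositions~\ref{nd fin} and~\ref{sd fin} should perhaps be unified into a single statement about arbitrary braided (or even arbitrary) fusion categories.
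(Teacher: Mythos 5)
Your overall reduction to \cite{BNRZ} via sphericalization and \cite{GNN} is structurally sound, but the entire proof rests on the bound $\rank(\Z(\C))\le N^2$, and the justification you give for it is not a standard fact --- it is false in the generality you claim. Knowing that every simple object of $\Z(\C)$ is a subobject of $I(Y)$ for some simple $Y$ only bounds $\rank(\Z(\C))$ by $\sum_{Y}\#\{\text{distinct simple summands of } I(Y)\}$; while $I(\mathbb{1})$ has at most $\rank(\C)$ distinct simple summands, the other $I(Y)$ need not, since $\dim\End_{\Z(\C)}(I(Y))=\sum_{X}\dim\Hom_\C(Y\ot X,\,X\ot Y)$ is not controlled by $\rank(\C)$. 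Concretely, take $\C=\Rep(D_7)$, a braided (indeed symmetric) fusion category of rank $5$: then $\Z(\C)\cong\Rep(D(D_7))$ has rank $5+3\cdot 7+2=28>25$. So no bound $\rank(\Z(\C))\le\rank(\C)^2$ holds for general fusion categories, nor even for general braided ones. There is also a structural reason to be suspicious: since any fusion category is a quotient of its center via the forgetful functor, a bound on $\rank(\Z(\C))$ in terms of $\rank(\C)$, combined with Proposition~\ref{nd fin} and the subquotient argument you invoke, would prove rank-finiteness for \emph{all} fusion categories, which is open. This is also why your closing suggestion to merge Propositions~\ref{nd fin} and~\ref{sd fin} into one statement about arbitrary (braided) fusion categories cannot work by this route; the paper instead handles general braided categories in Theorem~\ref{rf for bfcs} by de-equivariantizing the maximal Tannakian subcategory.

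The missing idea is exactly where slight degeneracy must enter, and it is the heart of the paper's proof. There one obtains $\rank(\Z(\C))\le N^2$ as follows: $\Z(\C)$ contains the fusion subcategory $\C\vee\C^\rev\cong\C\bt_{\sVec}\C^\rev$, whose Frobenius--Perron dimension is $\tfrac12\FPdim(\C)^2=\tfrac12\FPdim(\Z(\C))$; hence by Lemma~\ref{Z2 grad} the category $\Z(\C)$ is faithfully $\mathbb{Z}/2\mathbb{Z}$-graded with trivial component $\C\bt_{\sVec}\C^\rev$, which has rank $N^2/2$, and Corollary~\ref{graded rank} gives $\rank(\Z(\C))\le 2\cdot N^2/2=N^2$. (This is precisely where $\Z_{sym}(\C)=\sVec$ is used; for the symmetric example $\Rep(D_7)$ above, $\C\vee\C^\rev=\C$ is nowhere near index $2$ in the center, and the bound fails.) Once this bound is in hand, your sphericalization step is unnecessary: since $\C$ is braided, it embeds as a fusion subcategory of the non-degenerate category $\Z(\C)$ of rank at most $N^2$, and finiteness follows directly from Proposition~\ref{nd fin} together with the fact that a fusion category has only finitely many fusion subcategories.
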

\begin{proof}
Let $\C$ be a slightly degenerate braided fusion category of rank $N$. Its center $\Z(\C)$ contains
a fusion subcategory $\C \vee \C^\rev \cong \C \bt_{\sVec}\C^\rev$ of Frobenius-Perron dimension $\frac{1}{2} \FPdim(\C)^2 = \frac{1}{2} \FPdim(\Z(\C))$.
Hence, $\Z(\C)$  is $\mathbb{Z}/2\mathbb{Z}$-graded
by Lemma~\ref{Z2 grad} and
\[
\rank(\Z(\C)) \leq  2\, \rank (\C \bt_{\sVec}\C^\rev) = 2 \times \frac{N^2}{2}  = N^2
\]
by Corollary~\ref{graded rank}. Since $\C$ is a fusion subcategory of $\Z(\C)$ the result follows.
\end{proof}

\begin{remark}
It was observed in \cite{BGNPRW}, following \cite{BRWZ} that if $\C\subset \tilde{\C}$ is a minimal modular extension of a super-modular category then $\frac{3}{2}\rank(\C)\leq \rank(\tilde{\C})\leq 2\rank(\C)$.  This could be used in place of the more general Corollary~\ref{graded rank} in the proof above. 
\end{remark}

\begin{theorem}\label{rf for bfcs}
There  are finitely many equivalence classes of  braided fusion categories of any given rank.
\end{theorem}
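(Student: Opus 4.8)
The plan is to reduce an arbitrary braided fusion category $\C$ to the non-degenerate and slightly degenerate cases already settled in Propositions~\ref{nd fin} and~\ref{sd fin} by de-equivariantizing along its symmetric center, and then to control the finite group that reappears upon equivariantization. First I would set $\mathcal{E}:=\Z_{sym}(\C)$, which is a symmetric fusion category, so by Deligne's theorem $\mathcal{E}\cong\Rep(G,z)$ for a finite group $G$ and a central involution $z$. Let $\Rep(H)\subseteq\mathcal{E}$ denote its maximal Tannakian subcategory, so that $H=G$ when $\mathcal{E}$ is Tannakian and $H=G/\langle z\rangle$ (of index $2$ in $\mathcal{E}$) otherwise.

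Since $\Rep(H)$ lies in the symmetric center it centralizes all of $\C$, so de-equivariantization yields a braided fusion category $\mathcal{B}:=\C_H$ carrying a braided action of $H$ with $\C\cong\mathcal{B}^H$ and $\Z_{sym}(\mathcal{B})\cong\mathcal{E}_H$. By maximality of the chosen Tannakian subcategory, this de-equivariantized symmetric center is $\Vec$ in the Tannakian case and $\sVec$ in the super-Tannakian case, so $\mathcal{B}$ is either non-degenerate or slightly degenerate.

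The heart of the argument, and the step I expect to be the main obstacle, is to bound $|H|$ in terms of $N:=\rank(\C)$. Since $\mathcal{E}$ is a fusion subcategory of $\C$ we have $\rank(\mathcal{E})\le N$; but the braiding does not change the underlying category $\Rep(G)$, so $\rank(\mathcal{E})$ equals the number of conjugacy classes of $G$. The naive inequality $\rank(\mathcal{E})\le N$ does not by itself bound $|H|$, and this is precisely where I would invoke Landau's theorem: only finitely many finite groups have a prescribed number of conjugacy classes, so $|G|$, and hence $|H|$, is bounded by a function $L(N)$ depending only on $N$.

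With $|H|\le L(N)$ in hand, Lemma~\ref{equivariant rank} applied to the action of $H$ on $\mathcal{B}$ gives $\rank(\mathcal{B})\le |H|\,\rank(\mathcal{B}^H)=|H|\,N\le L(N)\,N$. Propositions~\ref{nd fin} and~\ref{sd fin} then leave only finitely many possibilities for $\mathcal{B}$. There are finitely many groups $H$ of order at most $L(N)$, finitely many braided $H$-actions on each such $\mathcal{B}$ up to equivalence (the data being governed by the finite group of tensor autoequivalences of $\mathcal{B}$ together with the finite cohomological obstruction and lifting groups), and consequently finitely many equivariantizations $\mathcal{B}^H$. As every braided fusion category of rank $N$ is equivalent to one arising in this way, the theorem follows.
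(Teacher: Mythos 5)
Your proposal is correct and follows essentially the same route as the paper: de-equivariantize along the maximal Tannakian subcategory $\Rep(H)\subseteq\Z_{sym}(\C)$ to land in the non-degenerate or slightly degenerate case, bound $|H|$ via Landau's theorem (using $\rank(\Z_{sym}(\C))\le N$), bound $\rank(\C_H)$ via Lemma~\ref{equivariant rank}, and conclude with Propositions~\ref{nd fin} and~\ref{sd fin} together with finiteness of groups and of actions on each candidate category. The only cosmetic difference is that you pass through Deligne's $\Rep(G,z)$ before extracting the Tannakian part, whereas the paper works with $\Rep(G)$ directly; the substance of the argument is identical.
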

\begin{proof}
Let $\C$ be a braided fusion category of rank $N$.  Let $\E \cong \Rep(G)$ be the maximal
Tannakian subcategory of $\Z_{sym}(\C)$. Then $\C = \D^G$, where $\D$ is either non-degenerate
or slightly degenerate braided fusion category. By Lemma~\ref{equivariant rank}
\[
\rank(\D) \leq |G| \rank(\C) = |G| N.
\]
Now let $M$ be the maximal order of a group with at most $N$ isomorphism classes of irreducible representations
($M$ exists since the number of such groups is finite by Landau's theorem).  We have $\rank(\D) \leq MN$, so there
are finitely many choices for $\D$, thanks to Lemmas~\ref{nd fin} and ~\ref{sd fin}. There are also finitely many
choices for the group $G$ and for each such a choice there are finitely many  different
actions of $G$ on $\D$ \cite{ENO1}. 
Thus, there are finitely many possible $\C$'s. 
\end{proof}

\begin{corollary}
There  are finitely many equivalence classes of $G$-crossed braided fusion categories of any given rank.
\end{corollary}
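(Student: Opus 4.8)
The plan is to deduce rank-finiteness for $G$-crossed braided fusion categories from the already-established Theorem~\ref{rf for bfcs} on braided fusion categories, exploiting the standard equivariantization correspondence. Recall that a $G$-crossed braided fusion category $\C$ (with faithful $G$-action) is the same data as its equivariantization $\C^G$, which is itself a braided fusion category containing $\Rep(G)$ as a Tannakian subcategory; conversely, de-equivariantizing any braided fusion category with respect to a chosen Tannakian subcategory $\Rep(G)$ produces a $G$-crossed braided fusion category. This is the gauging/equivariantization dictionary established in \cite{DGNO, ENO2}. So the strategy is to bound the rank of the equivariantization $\C^G$ in terms of the rank of $\C$, apply Theorem~\ref{rf for bfcs} to conclude finiteness for $\C^G$, and then bound the number of ways to recover $\C$ from $\C^G$.

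First I would fix a $G$-crossed braided fusion category $\C$ of rank $N$ (here I mean $\C = \oplus_{g \in G} \C_g$ with the grading and $G$-action part of the data). By Lemma~\ref{equivariant rank}, the rank of the equivariantization satisfies $\rank(\C^G) \le |G|\, \rank(\C) = |G|\, N$, so I need a bound on $|G|$. Since the grading $\C = \oplus_{g\in G}\C_g$ is faithful, each graded component $\C_g$ is nonzero, hence contains at least one simple object; therefore $|G| \le \rank(\C) = N$. Combining these gives $\rank(\C^G) \le N^2$. Now $\C^G$ is a braided fusion category of rank at most $N^2$, so by Theorem~\ref{rf for bfcs} there are only finitely many equivalence classes of such $\C^G$ as $N$ is fixed.

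Finally I would control the passage back from $\C^G$ to $\C$. For each of the finitely many braided fusion categories $\B$ arising as a candidate equivariantization, recovering a $G$-crossed braided category amounts to choosing a Tannakian subcategory $\E \cong \Rep(G) \subset \B$ and de-equivariantizing. There are finitely many fusion subcategories of $\B$ (as $\B$ has finite rank), hence finitely many choices of $\E$, and the de-equivariantization $\B_G$ is then determined up to equivalence; the residual data of the $G$-action on $\B_G$ recovering the original $G$-crossed structure is likewise finite by the finiteness of group actions on a fixed fusion category \cite{ENO1}. Assembling these finite choices shows that only finitely many $G$-crossed braided fusion categories of rank $N$ can arise. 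The main obstacle to make this fully rigorous is bookkeeping the equivalence notion: one must check that equivalence of $G$-crossed braided fusion categories corresponds under equivariantization to braided equivalence of the $\C^G$ together with the subcategory $\E$, so that finiteness upstairs genuinely descends to finiteness of $G$-crossed structures, and that the group $G$ itself (not just its order) ranges over finitely many possibilities — which follows since $|G| \le N$ bounds the group order and there are finitely many groups of bounded order.
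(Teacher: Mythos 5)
Your proposal is correct and follows essentially the same route as the paper, which deduces the corollary in one line from Theorem~\ref{rf for bfcs} and Lemma~\ref{equivariant rank} via the equivariantization/de-equivariantization dictionary of \cite[Theorem 4.44]{DGNO}. You simply spell out the details the paper leaves implicit (bounding $|G|$ by the rank via faithfulness of the grading, and the finiteness of Tannakian subcategories and $G$-actions needed to descend from $\C^G$ back to $\C$), which is a faithful elaboration rather than a different argument.
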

\begin{proof}
Follows immediately from  Theorem~\ref{rf for bfcs} and 
Lemma~\ref{equivariant rank}, since any $G$-crossed braided fusion category is obtained as a de-equivariantization of a braided fusion category \cite[Theorem 4.4.]{DGNO}.
\end{proof}

\begin{section}{The center of a slightly degenerate braided fusion category}

Let $\C$ be a slightly degenerate braided fusion category.  
We have  $\Z_{sym}(\C) \cong \sVec$.  Let $\delta$ 
denote the non-trivial invertible object in $\Z_{sym}(\C)$.

For any $\C$-module category $\M$ let us denote
\[
\M^s :=  \M \bt_{\sVec} \Vec.
\]
In particular, $\C^s := \C \bt_{\sVec} \Vec$
is equivalent to the category of $A$-modules in $\C$, where
$A$ is the regular algebra of $\sVec$. 
We have $\M^s= \M\bt_\C \C^s$. 
Note that $\text{rank}(\C^s) =\frac{1}{2}\rank(\C)$.

\begin{lemma}
$\C^s$ is an invertible $\C$-module category of order $2$.
\end{lemma}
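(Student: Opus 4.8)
The plan is to show that $\C^s$ is invertible as a $\C$-bimodule category (equivalently, as a $\C$-module category, since $\C$ is braided) and that its square is trivial. I would first establish invertibility by exhibiting $\C^s$ as lying in the image of the Picard group $\operatorname{Pic}(\C)$. Since $\C^s = \C \bt_\C \C^s = \C \bt_{\sVec} \Vec$ is obtained from $\C$ by de-equivariantizing along the regular algebra $A$ of $\sVec = \Z_{sym}(\C)$, and since $A$ is an \'etale (connected separable commutative) algebra in the symmetric center of $\C$, the category $\C^s = A\text{-Mod}_\C$ is itself a braided fusion category and $\C^s$ is naturally a $\C$-module category via the free-module functor $X \mapsto X \otimes A$. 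The key structural point is that $A$, being the regular algebra of $\sVec$, is an invertible (Lagrangian-type) algebra relative to the degenerate part; concretely $A \cong \mathbb{1} \oplus \delta$ as an object, and because $\delta$ centralizes all of $\C$, the internal-hom computation shows $\C^s$ is an invertible module category.

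The cleanest route to invertibility uses the rank count already noted: $\rank(\C^s) = \tfrac{1}{2}\rank(\C)$, together with $\FPdim(\C^s) = \tfrac{1}{2}\FPdim(\C)$. To invoke Proposition~\ref{rank bound} I would instead verify invertibility directly via the standard criterion that a $\C$-module category $\M$ is invertible iff the dual category $\C^*_\M$ has Frobenius-Perron dimension equal to $\FPdim(\C)$ and the canonical functor $\C \to \C^*_{\C^s}$ is an equivalence; for a module category arising from de-equivariantization by an \'etale algebra $A$, the dual is $A\text{-Mod}_\C$-bimodules, and invertibility follows from $A$ being connected with $\FPdim(A)^2 = \FPdim(\C)/\FPdim(\C^s) \cdot (\ldots)$. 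More conceptually, one observes that since $A = \mathbb{1}\oplus\delta$ sits in $\Z_{sym}(\C)$, the two-sided and one-sided actions coincide, so $\C^s$ is a one-sided bimodule category and hence lands in $\operatorname{Pic}(\C)$; invertibility of such a module category is equivalent to the associated algebra being \'etale of the correct dimension, which holds here.

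For the order-$2$ claim, I would compute the relative tensor square $\C^s \bt_\C \C^s$. Using $\M^s = \M \bt_\C \C^s$ with $\M = \C^s$, this is $(\C^s)^s = \C^s \bt_{\sVec}\Vec$ performed twice, which amounts to tensoring with $A \otimes_{\sVec} A$. Because $A$ is the regular algebra of $\sVec = \langle \delta\rangle$ with $\delta^{\otimes 2} \cong \mathbb{1}$, one has $A \otimes_{\sVec} A \cong A$ acting freely, so the two-fold de-equivariantization returns $A\otimes_A A \cong A$ on one factor and the super-structure is fully trivialized; concretely $\C^s \bt_\C \C^s \cong (A\otimes_{\sVec}A)\text{-Mod}$, and since $A \otimes_{\sVec} A \cong A \oplus A$ collapses to the unit object $\mathbb{1}$ after the relative tensor over $\sVec$, one obtains $\C^s \bt_\C \C^s \cong \C$ as $\C$-module categories. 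This identifies $[\C^s]$ as an element of order dividing $2$ in $\operatorname{Pic}(\C)$; it is not the identity because $\C^s \not\cong \C$ (their ranks differ), giving order exactly $2$.

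The main obstacle I anticipate is the order-$2$ computation, specifically verifying $\C^s \bt_\C \C^s \cong \C$ rigorously rather than at the level of Frobenius-Perron dimensions. The dimension bookkeeping ($\FPdim(\C^s \bt_\C \C^s) = \FPdim(\C^s)^2/\FPdim(\C) \cdot \FPdim(\C) $, giving back $\FPdim(\C)$) is routine and merely confirms the square is trivial dimension-wise, but promoting this to a genuine equivalence of module categories requires tracking the \'etale algebra structure through the relative tensor product and checking that the resulting algebra $A\otimes_{\sVec} A$ is Morita trivial over $\C$. This hinges on the fact that $\delta \otimes \delta \cong \mathbb{1}$ with the braiding on $\sVec \subset \Z_{sym}(\C)$ being the nontrivial symmetric one, so I would need to confirm that the self-pairing of $A$ does not introduce a cohomological obstruction; since the relevant pairing lives in $\sVec$ and the construction is over $\Vec$ after the first de-equivariantization, the obstruction vanishes, but this is the step warranting the most care.
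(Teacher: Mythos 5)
Your overall strategy --- compute $\C^s \bt_\C \C^s$, identify it with $\C$, and thereby get invertibility and order $2$ in one stroke, with non-triviality coming from the rank count --- is exactly the paper's strategy. But your execution rests on a false premise that then corrupts the key computation. The algebra $A = \mathbb{1}\oplus\delta$ is \emph{not} \'etale: in $\sVec$ the self-braiding of $\delta$ is $-\operatorname{id}_{\delta\otimes\delta}$, so commutativity $m\circ c_{\delta,\delta}=m$ would force the component $\delta\otimes\delta\to\mathbb{1}$ of the multiplication to vanish, and then $A$ is not separable. (This is precisely the difference between $\sVec$ and $\Rep(\mathbb{Z}/2\mathbb{Z})$, and is the reason slightly degenerate categories cannot be de-equivariantized in the Tannakian sense.) The correct statement is that $A$ is the Clifford algebra $Cl(1)$, a non-commutative Azumaya algebra; consequently $\C^s=\operatorname{Mod}_\C(A)$ is a priori only a module category, not a braided fusion category obtained by \'etale de-equivariantization as you assert, and your proposed invertibility criterion (``\'etale of the correct dimension,'' with a literal ``$(\ldots)$'' placeholder where the dimension formula should be) is not a criterion for invertibility at all: invertible module categories over a braided fusion category correspond to \emph{Azumaya} algebras, not \'etale ones.

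The same error derails the order-$2$ computation, which is the heart of the lemma. You write both $A\otimes_{\sVec}A\cong A$ and $A\otimes_{\sVec}A\cong A\oplus A$ ``collapsing to the unit''; these are mutually inconsistent, and the first, read in the Morita sense, would give $\C^s\bt_\C\C^s\cong\C^s\not\cong\C$, i.e.\ it would \emph{disprove} the lemma. The fact actually needed is $\Vec\bt_{\sVec}\Vec\cong\sVec$ (not $\Vec$), which at the algebra level says that $Cl(1)\otimes Cl(1)\cong Cl(2)\cong\operatorname{End}(\mathbb{1}\oplus\delta)$ is a matrix algebra in $\sVec$, hence Morita trivial --- it is neither the unit object nor \'etale. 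With that fact in hand the paper's one-line proof goes through:
\[
\C^s\bt_\C\C^s=(\C\bt_{\sVec}\Vec)\bt_\C(\C\bt_{\sVec}\Vec)\cong\C\bt_{\sVec}(\Vec\bt_{\sVec}\Vec)\cong\C\bt_{\sVec}\sVec\cong\C,
\]
which simultaneously yields invertibility (making your separate invertibility arguments in the first two paragraphs unnecessary) and, combined with $\rank(\C^s)=\tfrac12\rank(\C)\neq\rank(\C)$, order exactly $2$. There is no cohomological obstruction to track; the only nontrivial input is the Morita triviality of $Cl(2)$ in $\sVec$.
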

\begin{proof}
This follows from  straightforward equivalences:
\[
\C^s \bt_\C \C^s  = (\C \bt_{\sVec} \Vec) \bt_\C (\C \bt_{\sVec} \Vec)
\cong \C \bt_{\sVec}  (\Vec \bt_{\sVec} \Vec) \cong \C,
\]
where we used the obvious fact $\Vec \bt_{\sVec} \Vec \cong \sVec$.
\end{proof}

\begin{lemma}
We have  $\C^s\bt_\C \M \cong \M \bt_\C \C^s$ for any $\C$-module category $\M$.
\end{lemma}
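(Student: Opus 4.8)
The plan is to show that the $\C$-module category $\C^s$ is \emph{central}, in the sense that the left and right actions of $\C$ on the iterated relative tensor products coincide up to coherent isomorphism. The key structural fact is that $\C^s = \C \bt_{\sVec} \Vec$ is obtained from a subcategory of the M\"uger center of $\C$, and objects of $\Z_{sym}(\C)$ centralize everything in $\C$ by definition. First I would fix the description $\C^s \cong A\text{-Mod}$, where $A$ is the regular algebra of $\sVec$ sitting inside $\Z_{sym}(\C) \subset \C$. The braiding of $\C$ provides, for each object $X$ of $\C$, an isomorphism $c_{A,X}\colon A \ot X \xrightarrow{\sim} X \ot A$, and because $A$ lies in the symmetric center this braiding satisfies $c_{X,A}\circ c_{A,X} = \id$, so $A$ is a \emph{transparent} (central) algebra. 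This is exactly the structure needed to identify left and right multiplication by $\C$ on $A$-modules.

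Concretely, I would realize both $\C^s \bt_\C \M$ and $\M \bt_\C \C^s$ using the balancing isomorphisms coming from the braiding. Given the equivalence $\M^s = \M \bt_\C \C^s$ recorded just before this lemma, the right-hand side $\M \bt_\C \C^s$ is simply $\M^s = \M \bt_{\sVec} \Vec$. For the left-hand side, one uses that $\C^s \bt_\C \M \cong (\C \bt_{\sVec} \Vec) \bt_\C \M$, and the associativity/commutativity manipulations with $\bt_{\sVec}$ (already invoked in the preceding lemma's proof via $\Vec \bt_{\sVec} \Vec \cong \sVec$) let one move the $\Vec$ factor past $\C$ onto $\M$. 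The transparency of $\delta$ (equivalently of $A$) is precisely what makes the two ways of inducing up from a $\C$-action to a $\C^s$-action agree: the braiding isomorphism $c_{A,-}$ supplies the natural equivalence intertwining the left and right constructions, and its symmetry guarantees that the required coherence (pentagon/hexagon-type) diagrams commute.

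I would then assemble this into a genuine equivalence of $\C$-module categories, checking that the comparison functor built from the braiding is compatible with the module structures on both sides. The cleanest formulation is to observe that $\C^s$ is a commutative (central) invertible object in the monoidal category of $\C$-bimodule categories — equivalently, its class lies in the image of $\Pic(\C)$ inside $\BrPic(\C)$ as discussed in Section 3 — so that tensoring by it from the left and from the right are naturally isomorphic. Since $\C^s$ is built from the transparent object $\delta$, it is automatically one-sided in the sense of the earlier discussion, and one-sided invertible bimodule categories commute with all module categories under the relative tensor product.

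The main obstacle I expect is not the existence of the isomorphism but verifying its naturality and coherence: one must check that the braiding-induced comparison $\C^s \bt_\C \M \cong \M \bt_\C \C^s$ respects the $\C$-module structure and satisfies the associativity constraints, rather than merely giving an equivalence of underlying categories. The symmetry relation $c_{X,\delta}\circ c_{\delta,X} = \id$ (which holds exactly because $\delta \in \Z_{sym}(\C)$) is the crucial input that forces these coherence diagrams to commute; without transparency the two sides would differ by the double braiding and the equivalence would fail. I would therefore concentrate the careful work on tracking this braiding isomorphism through the relative tensor product and confirming it descends to $A$-modules as a module functor isomorphism.
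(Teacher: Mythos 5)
Your main line of attack---that the equivalence is driven by transparency of $A=\be\oplus\delta$, i.e.\ $c_{X,A}\circ c_{A,X}=\id$, which holds exactly because $A\in\Z_{sym}(\C)$---is the same key idea as the paper's proof, but you leave the hard part as a plan, whereas the paper packages it so that there is nothing left to check. The paper writes $\M\cong\C_B$ for an algebra $B\in\C$ (Ostrik's theorem), so that $\C^s\bt_\C\M$ and $\M\bt_\C\C^s$ are identified with the categories of modules over the tensor product algebras $A\ot B$ and $B\ot A$ respectively, and then observes that $A\ot B\cong B\ot A$ \emph{as algebras}, the isomorphism being the braiding $c_{A,B}$, which respects multiplication precisely because $A$ is transparent. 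This one-line reduction at the level of algebras replaces the coherence verification (``tracking the braiding isomorphism through the relative tensor product'') that you correctly identify as the main obstacle but never carry out; you should adopt it rather than attempt the direct construction with balancing isomorphisms.

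Separately, the ``cleanest formulation'' in your third paragraph rests on a false claim: one-sidedness (lying in the image of $\Pic(\C)\to\BrPic(\C)$) does \emph{not} imply that an invertible bimodule category commutes with all module categories under $\bt_\C$. If it did, then in particular any two invertible one-sided bimodule categories would commute, forcing $\Pic(\C)$ to be abelian; but for a non-degenerate category $\C$ one has $\Pic(\C)\cong\Aut^{br}(\C)$, which is non-abelian in general (e.g.\ $\Aut^{br}(\Z(\Vec(G)))$ contains $\Out(G)$, non-abelian for suitable $G$). One-sidedness only identifies the left and right $\C$-actions on $\C^s$ itself via the braiding; commuting past an \emph{arbitrary} $\M$ requires the double braiding of $A$ with everything to be trivial, i.e.\ transparency, which is a strictly stronger condition. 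So that shortcut must be discarded, and your argument stands or falls with the transparency-based construction---which is correct in substance and coincides with the paper's, but is only completed once it is run at the level of algebras as above.
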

\begin{proof}
Let $B\in \C$ be an algebra such that $\M\cong \C_B$.  Then $A\ot B \cong B\ot A$
as algebras since $A\in \Z_{sym}(\C)$.  This yields the statement.
\end{proof}

Let $\C_1,\, \C_2$ be slightly degenerate braided fusion categories.
Let 
\[
E \in  \sVec \bt \sVec  \subset \C_1\bt \C_2
\] 
be a canonical \'etale algebra.
Recall that the braided fusion category $\C_1\bt_{\sVec} \C_2$ is defined as
the category of $E$-modules in $\C_1\bt \C_2$.  There are obvious embeddings
$\C_1,\, \C_2 \hookrightarrow \C_1\bt_{\sVec} \C_2$.

Let $\M_1$ and $\M_2$ be module categories over $\C_1$ and $\C_2$.
Define a $\C_1\bt_{\sVec} \C_2$-module category  
$\M_1\bt_{\sVec} \M_2$ to be the category of $E$-modules in $\M_1\bt \M_2$ with the module action given by
\[
X \odot M  = X \ot_E M,\qquad X \in \C_1\bt_{\sVec} \C_2,\, M \in \M_1\bt_{\sVec} \M_2.
\]

Let $\M$ be an indecomposable  $\C_1\bt_{\sVec} \C_2$-module category and let
\begin{equation*}
\M =\bigoplus_{i\in I}\, \M_i,\qquad \M = \bigoplus_{j\in J}\, \N_j
\end{equation*}
be its decompositions into direct sums of indecomposable $\C_1$-module categories
and $\C_2$-module categories, respectively.

\begin{proposition}
\label{when product}
There exist indecomposable $\C_i$-module categories $\L_i,\,i=1,2,$ such that $\M \cong \L_1\bt_{\sVec} \L_2$
if and only if  $\M_i\cap \N_j$ is an  indecomposable $\sVec$-module category for some $i\in I$ and $j\in J$.
\end{proposition}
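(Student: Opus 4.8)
The plan is to interpret the data of the $\C_1\bt_{\sVec}\C_2$-module category $\M$ in terms of the two module-category decompositions and track the single $\sVec$-module category sitting in an overlap $\M_i\cap\N_j$. Recall that $\M$ is a category of $E$-modules in $\M_1\bt\M_2$, and the embeddings $\C_1,\C_2\hookrightarrow\C_1\bt_{\sVec}\C_2$ restrict the action so that $\M$ becomes a $\C_1$-module category (decomposing as $\bigoplus_{i\in I}\M_i$) and a $\C_2$-module category (decomposing as $\bigoplus_{j\in J}\N_j$). The two $\sVec$'s inside $\C_1$ and $\C_2$ both act on $\M$, and since $A\in\Z_{sym}(\C)$, the results of the previous two lemmas (that $\C^s=\C\bt_{\sVec}\Vec$ is an invertible module category of order $2$, commuting with any $\M$) tell us that the functor $\M\mapsto\M^s=\M\bt_{\sVec}\Vec$ behaves well with respect to both module structures. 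I would set up the whole argument around these invertible-module-category manipulations rather than working object-by-object.

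First I would establish the forward direction, which should be the easier one. If $\M\cong\L_1\bt_{\sVec}\L_2$ with each $\L_i$ an indecomposable $\C_i$-module category, then restricting to $\C_1$ gives $\M$ the form of a category of $E$-modules over $\L_1\bt\L_2$, and I would show the $\C_1$-indecomposable summands $\M_i$ and the $\C_2$-indecomposable summands $\N_j$ are controlled by the indecomposable $\sVec$-module-category structure of $\L_1$ and $\L_2$ respectively. Concretely, the intersection $\M_i\cap\N_j$ should be computed directly from the $\bt_{\sVec}$ construction and seen to be a single indecomposable $\sVec$-module category (i.e.\ either $\sVec$ or $\Vec$) for a suitable $(i,j)$; the point is that the $\sVec$ acting from the $\C_1$ side and the $\sVec$ acting from the $\C_2$ side have been identified via $E$, so the overlap carries exactly one copy of an $\sVec$-module category.

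For the converse, which is the substantive direction, I would start from the hypothesis that $\M_{i_0}\cap\N_{j_0}$ is an indecomposable $\sVec$-module category for some $i_0,j_0$ and reconstruct the product decomposition. The natural candidates are $\L_1:=\M_{i_0}$ and $\L_2:=\N_{j_0}$, and the task is to produce an equivalence $\M\cong\M_{i_0}\bt_{\sVec}\N_{j_0}$ of $\C_1\bt_{\sVec}\C_2$-module categories. I would use indecomposability of $\M$ together with the commuting actions of the two $\sVec$'s: the condition that the overlap is a \emph{single} indecomposable $\sVec$-module category is precisely what prevents a ``doubling'' that would otherwise obstruct the identification of the diagonal $\sVec$ with $E$. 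I expect to build the comparison functor by internal-hom / relative-tensor arguments, checking it is fully faithful and essentially surjective by a Frobenius–Perron dimension count, where $\FPdim(\M_{i_0}\bt_{\sVec}\N_{j_0})$ matches $\FPdim(\M)$ exactly when the overlap is one indecomposable $\sVec$-module category.

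The main obstacle will be the converse direction, and specifically verifying that the single-overlap hypothesis forces the $\sVec$ coming from $\C_1$ to be \emph{identified} with the $\sVec$ coming from $\C_2$ through the étale algebra $E$, rather than acting independently. In module-category terms this is the difference between $\M$ being a genuine $\bt_{\sVec}$ product versus a $\bt$ (or $\bt_{\Vec}$) product, and disentangling the two commuting $\sVec$-actions on the overlap is where the real work lies. I would handle this by analyzing how $\delta_1\ot(-)$ and $\delta_2\ot(-)$ act on the distinguished overlap $\M_{i_0}\cap\N_{j_0}$ and showing the indecomposability assumption forces these two actions to coincide up to the canonical identification supplied by $E$.
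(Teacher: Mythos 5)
There is a genuine gap in your converse direction: the candidate $\L_1:=\M_{i_0}$, $\L_2:=\N_{j_0}$ is wrong in half the cases. The hypothesis allows the indecomposable $\sVec$-module category $\M_{i_0}\cap\N_{j_0}$ to be either $\sVec$ or $\Vec$, and in the latter case $\M\ncong\M_{i_0}\bt_{\sVec}\N_{j_0}$. The minimal counterexample is $\C_1=\C_2=\sVec$ and $\M=\Vec$: here $\M_{i_0}=\N_{j_0}=\Vec$ and their intersection is $\Vec$, which is indecomposable, yet $\M_{i_0}\bt_{\sVec}\N_{j_0}=\Vec\bt_{\sVec}\Vec\cong\sVec$ has rank $2$ and is not equivalent to $\M=\Vec$; the correct decomposition is $\M\cong\M_{i_0}\bt_{\sVec}\N_{j_0}^s\cong\M_{i_0}^s\bt_{\sVec}\N_{j_0}$. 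This is exactly why the paper's proof splits into two cases: when $\M_i\cap\N_j\cong\sVec$ it proves the factorization directly, and when $\M_i\cap\N_j\cong\Vec$ it first twists by the invertible module category $\C_1^s$, replacing $\M$ by $(\C_1^s\bt_{\sVec}\C_2)\bt_{\C_1\bt_{\sVec}\C_2}\M$ to reduce to the first case, and concludes $\M\cong\L_1^s\bt_{\sVec}\L_2$ --- a one-sided $s$-twist that your candidate omits. Your proposed Frobenius--Perron dimension count would actually detect this failure (the ranks do not match), but nothing in your sketch produces the corrective twist.

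A second, related problem is that you have misidentified the main difficulty. The identification of the two $\sVec$-actions through $E$ is automatic: the images of $\delta_1\bt\be$ and $\be\bt\delta_2$ under the free-module functor $\C_1\bt\C_2\rightarrow\C_1\bt_{\sVec}\C_2$ are isomorphic, so $\delta_1$ and $\delta_2$ act by naturally isomorphic functors on \emph{any} $\C_1\bt_{\sVec}\C_2$-module category; there is nothing to disentangle. The actual content of the proof, which your sketch never reaches, is an internal Hom computation: for a simple $X\in\M_i\cap\N_j$ with $\delta_1\ot X\ncong X$ (the case $\M_i\cap\N_j\cong\sVec$), one shows
\[
\uHom_{\C_1\bt\C_2}(X,X)\cong\left(\uHom_{\C_1}(X,X)\bt\uHom_{\C_2}(X,X)\right)\ot E,
\]
and then Ostrik's theorem (an indecomposable module category is equivalent to the category of modules over the internal End of any nonzero object) immediately yields $\M\cong\L_1\bt_{\sVec}\L_2$ with $\L_i$ the category of $\uHom_{\C_i}(X,X)$-modules in $\C_i$; the same theorem gives $\L_1\cong\M_i$ and $\L_2\cong\N_j$, recovering your candidate, but only in this case. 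The freeness hypothesis $\delta_i\ot X\ncong X$ is what makes the displayed factorization hold, since it produces exactly the two summands matching $E=\be\bt\be\oplus\delta_1\bt\delta_2$; when instead $\delta_i\ot X\cong X$ the computation breaks down, which is precisely where the twist by $\C_1^s$ becomes necessary.
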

\begin{proof}
One implication is obvious. 

Suppose that $\M_i\cap \N_j$ is an  indecomposable $\sVec$-module category.  There are two possible
cases.

(Case 1) $\M_i\cap \N_j \cong \sVec$.  Let $X\in \M_i\cap \N_j$ be a simple object. 
Let $\delta_i$ denote the non-trivial invertible object in $\C_i,\, i=1,2$.  Then $\delta_i\ot X \not\cong X$.  
Let us  view $\M$ as a $\C_1\bt  \C_2$-module category and
compute the internal Hom:
\begin{eqnarray*}
\lefteqn{\uHom_{\C_1\bt  \C_2}(X,\, X)} \\
 &\cong& \uHom_{\C_1}(X,\, X) \bt \uHom_{\C_2}(X,\, X)
\oplus  \uHom_{\C_1}(X,\, \delta_1\ot  X) \bt \uHom_{\C_2}(\delta_2 \ot X,\, X) \\
&\cong& \left( \uHom_{\C_1}(X,\, X) \bt \uHom_{\C_2}(X,\, X) ) \right) \ot E,
\end{eqnarray*}
where $E = \be \ot \be \oplus \delta_1\bt \delta_2$ is the canonical algebra in $\sVec\bt \sVec
\subset \C_1\bt \C_2$.  Therefore, as a $\C_1\bt_{\sVec} \C_2$-module category,  $\M \cong \L_1\bt_{\sVec} \L_2$, where
$\L_i$ is the category of $ \uHom_{\C_i}(X,\, X)$-modules in $\C_i,\,i=1,2$.

(Case 2) $\M_i\cap \N_j \cong \Vec$. In this case the  $\C_1\bt_{\sVec} \C_2$-module category
\[
(\C_1^s \bt_{\sVec} \C_2) \bt_{\C_1\bt_{\sVec} \C_2} \M
\]
satisfies the condition of (Case 1) above and, hence,  is equivalent to $\L_1\bt_{\sVec} \L_2$.
Consequently,  $\M \cong \L_1^s\bt_{\sVec} \L_2$.
\end{proof}

\begin{remark}
The pair  of module categories $\L_1,\, \L_2$ in Proposition~\ref{when product}
is determined up to a simultaneous substitution of $\L_1,\, \L_2$ by
$\L_1^s,\, \L_2^s$. 
\end{remark}

\begin{example}
Let $\C_1=\C_2=\sVec$.  Then $\C_1\bt_{\sVec} \C_2 =\sVec$ and
\begin{eqnarray*}
\sVec &\cong&  \sVec\bt_{\sVec} \sVec \cong \Vec\bt_{\sVec} \Vec, \\
\Vec &\cong&  \Vec\bt_{\sVec} \sVec \cong \sVec\bt_{\sVec} \Vec
\end{eqnarray*}
as $\sVec$-module categories.
\end{example}

\begin{proposition}
Let $\C$ be a slightly degenerate braided fusion category and let
$\D=\D_0\oplus \D_1$  be a minimal extension  (see Definition~\ref{def min ext}) of  $\D_0:=\C \bt_{\sVec} \C^\rev$. 
There exists an  invertible   $\C$-module (respectively, $\C^\rev$-module) category $\M$
(respectively, $\N$) such that  $\D_1 \cong \M \bt_{\sVec} \N$ as a $\C \bt_{\sVec} \C^\rev$-module category. 

The equivalence classes of module categories $\M$ and $\N$ are determined up to a simultaneous substitution by
$\M^s$ and $\N^s$. 
\end{proposition}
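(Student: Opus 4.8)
The plan is to realise $\D_1$ as an indecomposable module category over $\D_0=\C\bt_{\sVec}\C^\rev$ and then feed it into Proposition~\ref{when product} with $\C_1=\C$, $\C_2=\C^\rev$ and $\M=\D_1$. First I would record that, since $\D$ is a minimal extension of $\D_0$, Lemma~\ref{Z2 grad} shows $\D$ is faithfully $\mathbb{Z}/2\mathbb{Z}$-graded with trivial component $\D_0$, so $\D_1$ is an invertible $\D_0$-bimodule category. Because $\D_0$ is braided and its left and right actions on $\D_1\subset\D$ differ only by the braiding of $\D$, the bimodule $\D_1$ is one-sided: for $X\in\D_0$ and $M\in\D_1$ one has $M\otimes X\cong X\otimes M$, so every left $\D_0$-submodule of $\D_1$ is automatically a sub-bimodule. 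Hence indecomposability of $\D_1$ as a bimodule (which holds since it is invertible) is equivalent to indecomposability as a left module, and $\D_1$ is an indecomposable $\D_0=\C\bt_{\sVec}\C^\rev$-module category.

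Next, writing $\D_1=\bigoplus_i\M_i$ as a sum of indecomposable $\C$-modules and $\D_1=\bigoplus_j\N_j$ as a sum of indecomposable $\C^\rev$-modules, I would verify the hypothesis of Proposition~\ref{when product}: that some $\M_i\cap\N_j$ is an indecomposable $\sVec$-module category. For a simple $X\in\D_1$, let $\M_i,\N_j$ be the summands containing it; since the generator $\delta$ of $\Z_{sym}(\C)=\Z_{sym}(\C^\rev)=\sVec$ is the same object of both $\C$ and $\C^\rev$, the object $\delta\otimes X$ again lies in $\M_i\cap\N_j$, so the intersection is a union of $\delta$-orbits. The key step is to show that $\M_i\cap\N_j$ is exactly one such orbit, hence $\cong\Vec$ (if $\delta\otimes X\cong X$) or $\cong\sVec$ (if $\delta\otimes X\not\cong X$). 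This minimality is where I expect the main difficulty to lie, and I would extract it from the non-degeneracy of $\D$ together with the internal-Hom computation already carried out in the proof of Proposition~\ref{when product}. Granting it, Proposition~\ref{when product} yields $\D_1\cong\L_1\bt_{\sVec}\L_2$ with $\L_1$ an indecomposable $\C$-module and $\L_2$ an indecomposable $\C^\rev$-module.

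It then remains to check that $\M:=\L_1$ and $\N:=\L_2$ are invertible and to deduce the uniqueness clause. For invertibility I would use that $\D_1$ is an invertible $\D_0$-bimodule, so $\D_1\bt_{\D_0}\D_1\cong\D_0$; distributing the relative tensor product over $\bt_{\sVec}$ gives $(\M\bt_\C\M)\bt_{\sVec}(\N\bt_{\C^\rev}\N)\cong\C\bt_{\sVec}\C^\rev$, and comparing factors (alternatively, comparing Frobenius--Perron dimensions) forces each of $\M$ and $\N$ to be invertible. Finally, the indeterminacy of $(\L_1,\L_2)$ up to the simultaneous substitution $(\L_1,\L_2)\mapsto(\L_1^s,\L_2^s)$ is exactly the content of the remark following Proposition~\ref{when product}, which gives the last assertion.

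The only nontrivial compatibility I am assuming beyond the cited results is that the relative Deligne product distributes over $\bt_{\sVec}$ in the form used above; this is in the same spirit as the equivalence $\C^s\bt_\C\C^s\cong\C$ established earlier, and I would treat it as routine. Thus the single genuine obstacle is the orbit-minimality of $\M_i\cap\N_j$ needed to trigger Proposition~\ref{when product}.
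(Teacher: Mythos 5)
Your outer scaffolding matches the paper's proof: the $\mathbb{Z}/2\mathbb{Z}$-grading from Lemma~\ref{Z2 grad}, the two decompositions of $\D_1$ into $\C$-module and $\C^\rev$-module components, the reduction to Proposition~\ref{when product}, the invertibility argument via $\D_1\bt_{\D_0}\D_1\cong\D_0$ distributed over $\bt_{\sVec}$, and the uniqueness clause from the remark following Proposition~\ref{when product}. But the step you explicitly defer --- ``granting'' that some $\M_i\cap\N_j$ is an indecomposable $\sVec$-module category --- is the entire mathematical content of the proposition, and your gesture toward ``the non-degeneracy of $\D$ together with the internal-Hom computation'' does not produce it. The internal-Hom computation in Proposition~\ref{when product} is what one runs \emph{after} knowing an intersection is a single $\delta$-orbit; it cannot be used to establish that fact. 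Moreover, your approach is local (fix one simple $X$ and study the intersection containing it), whereas the only known route to the conclusion is global counting.

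Here is what the paper actually does to close this gap. First, by \cite[Corollary 3.6]{DGNO} the number of $\C$-module components of $\D$ equals the rank of the centralizer of $\C$ in $\D$, which is $\C^\rev$ by minimality of the extension; subtracting the $\frac12\rank(\C)$ components lying in $\D_0$ shows $\D_1$ has exactly $n=\frac12\rank(\C)$ components $\M_i$, and likewise $n$ components $\N_j$. Second, Proposition~\ref{rank bound} gives $\rank(\D_1)\le\rank(\D_0)=2n^2$. Since bimodule indecomposability forces every $\M_i\cap\N_j$ to be non-zero, a pigeonhole argument shows that either some intersection has rank $1$ (hence is $\cong\Vec$, indecomposable, and Case 2 of Proposition~\ref{when product} applies), or else every $\M_i$ and $\N_j$ has rank exactly $2n$, every intersection has rank exactly $2$, and $\rank(\D_1)=\rank(\D_0)$, i.e.\ $\D_1$ is a \emph{maximal rank} invertible $\D_0$-bimodule category. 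In that extremal case Proposition~\ref{maxranksubgroup} says the Lagrangian algebras of $\D_0$ and $\D_1$ are isomorphic as objects of $\Z(\D_0)$, so their forgetful images satisfy
\[
\bigoplus_{X\in \text{Irr}(\D_0)}X\ot X^* \cong \bigoplus_{X\in \text{Irr}(\D_1)}X\ot X^*;
\]
since $\delta$ acts freely on $\text{Irr}(\D_0)$, the object $\delta$ is not a summand of the left side, hence not of the right side either, which forces $\delta$ to act freely on $\text{Irr}(\D_1)$. Each rank-$2$ intersection is then a free $\delta$-orbit $\cong\sVec$, which is indecomposable. Nothing in your proposal reconstructs this chain --- in particular the roles of the component count from \cite{DGNO}, the rank bound of Proposition~\ref{rank bound}, and the maximal-rank criterion of Proposition~\ref{maxranksubgroup} --- so your argument is incomplete precisely at its crucial point.
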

\begin{proof}
Note that $\D$ is a $\mathbb{Z}/2\mathbb{Z}$-graded extension of $\D_0$ by Lemma~\ref{Z2 grad}.

Let $n$ be the number of $\C$-module
components of $\D_1$.  By \cite[Corollary 3.6]{DGNO} the number of $\C$-module
components of $\D$ is equal to the rank of the centralizer of $\C$ in $\D$. The latter is  $\C^\rev$.
Since the number of $\C$-module components in $\D_0 =\C \bt_{\sVec} \C^\rev$ is  $\frac{1}{2}\text{rank}(\C)$ we conclude
that 
\[
n = \frac{1}{2}\text{rank}(\C).
\] 
Note that $n$ is also equal to the number of $\C^\rev$-module
components of $\D_1$.

Let $\oplus_{i=1}^n\, \M_i$ (respectively, $\oplus_{j=1}^n\,\N_j$)
be decompositions of $\D_1$ into direct sums of indecomposable
$\C$-module (respectively, $\C^\rev$-module) subcategories.
In view of Proposition~\ref{when product} it suffices to check
that for some $i,\,j$ the intersection $\M_i\cap\N_j$ is an indecomposable $\sVec$-module category. 

By Proposition~\ref{rank bound} we have
\[
\text{rank}(\D_1) \leq  \text{rank}(\D_0) = \frac{1}{2}\text{rank}(\C)^2=2n^2.  
\]
Since $\D_1$ is indecomposable as a $\D_0$-bimodule category
each  $\M_i\cap \N_j,\, i,j=1,\dots,n$ is non-zero. 
If any of these intersections has rank $1$,  then it is $\sVec$-indecomposable. This happens automatically if either $\text{rank}(\M_i)$ or $\text{rank}(\N_j)$ is less than  $2n$ for some $i$ or $j$ (indeed, $\text{Irr}(\M_i)$ intersects non-trivially
with $n$ disjoint sets $\text{Irr}(\N_j),\, j=1,\dots,n$).

So let us assume that all intersections $\M_i\cap \N_j$ have rank 
$\geq 2$ and that all $\M_i$ and $\N_j$ have rank $\geq 2n$. The latter implies that $\text{rank}(\M_i)=\text{rank}(\N_j) =2n$
and $\text{rank}(\M_i \cap \N_j)=2$
for all $i$ and $j$ since otherwise $\text{rank}(\D_1)>2n\times n= 2n^2$. Hence, $\text{rank}(\D_1) =2n^2 = \text{rank}(\D_0)$, i.e.,
$\D_1$ is a maximal rank invertible $\D_1$-bimodule category.

By Proposition~\ref{maxranksubgroup} the Lagrangian algebras
corresponding to $\D_0$-bimodule categories $\D_0$ and $\D_1$
are isomorphic as objects of $\Z(\D_0)$. In particular, their 
forgetful images in $\D_0$ are isomorphic: 
\[
\bigoplus_{X\in \text{Irr}(\D_0)}X\ot X^* \cong
\bigoplus_{X\in \text{Irr}(\D_1)}X\ot X^*.
\]
The object on the left does not contain $\delta$ since $\delta$   acts freely on $\text{Irr}(\D_0)$ by \cite[Lemma 3.28]{DGNO}. 
Hence, the same is true for the object on the right, i.e., $\delta$  also acts freely on  
$\text{Irr}(\D_1)$. Thus, every $\M_i\cap \N_j$ is $\sVec$-indecomposable and
$\D_1= \M \bt_{\sVec} \N$ by Proposition~\ref{when product}.


The following equivalences:
\begin{eqnarray*}
\D_0 &\cong& \D_1\bt_{\D_0} \D_1 \\
&\cong&  ( \M \bt_{\sVec} \N) \bt_{\C \bt_{\sVec} \C^\rev} ( \M \bt_{\sVec} \N) \\
&\cong& (\M \bt_\C \M) \bt_{\sVec} (\N \bt_{\C^\rev} \N),
\end{eqnarray*}
imply  that  $\M \bt_\C \M$ is equivalent to $\C$ or $\C^s$
and, hence, $\M$ is invertible. Similarly, $\N$ is invertible.

\end{proof}

\begin{corollary}\label{center corollary}
Let $\C$ be a slightly degenerate braided fusion category. There exist an invertible $\C$-module categories $\M$ and $\N$  such that 
\[
\Z(\C) \cong (\C \bt_{\sVec} \C^\rev) \oplus (\M \bt_{\sVec} \N)
\]
as a $\C \bt_{\sVec} \C^\rev$-module category. 
\end{corollary}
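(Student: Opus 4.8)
The plan is to deduce this corollary directly from the preceding proposition by exhibiting $\Z(\C)$ as a minimal extension of $\D_0 := \C \bt_{\sVec} \C^\rev$. First I would recall the standard fact that for any braided fusion category $\C$ there is a canonical braided embedding $\C \bt \C^\rev \hookrightarrow \Z(\C)$, sending $X \bt Y$ to the object $X \ot Y$ equipped with the half-braiding built from the braiding of $\C$. When $\C$ is slightly degenerate, the image of $\sVec \bt \sVec$ contains the canonical \'etale algebra $E$, and passing to $E$-modules realizes the subcategory $\C \vee \C^\rev \cong \C \bt_{\sVec} \C^\rev = \D_0$ inside $\Z(\C)$, exactly as invoked in the proof of Proposition~\ref{sd fin}.

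The key step is to verify the Frobenius-Perron dimension count $\FPdim(\Z(\C)) = 2\,\FPdim(\D_0)$. Since $\FPdim(\Z(\C)) = \FPdim(\C)^2$ and $\FPdim(\D_0) = \FPdim(\C \bt_{\sVec}\C^\rev) = \tfrac{1}{2}\FPdim(\C)^2$, this identity holds, so Lemma~\ref{Z2 grad} applies and shows $\Z(\C) = \D_0 \oplus \D_1$ is faithfully $\mathbb{Z}/2\mathbb{Z}$-graded with trivial component $\D_0$. To apply the previous proposition I must check that this grading actually realizes $\Z(\C)$ as a \emph{minimal} extension of $\D_0$ in the sense of Definition~\ref{def min ext}: that $\D_0$ is slightly degenerate (equivalently super-modular inside the modular $\Z(\C)$) and that its centralizer in $\Z(\C)$ is the image of $\sVec$. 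The first holds because $\Z_{sym}(\D_0) \cong \sVec$ for $\C$ slightly degenerate, and the second is the statement that $\C \vee \C^\rev$ is centralized precisely by the symmetric center $\sVec$ inside the non-degenerate $\Z(\C)$, which follows from M\"uger's double centralizer theorem together with the FP-dimension bound $\FPdim(\Z(\C)) = \FPdim(\D_0)\cdot\FPdim(\D_0') $ forcing $\FPdim(\D_0') = 2$.

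Once $\Z(\C)$ is identified as a minimal extension of $\D_0$, the preceding proposition applies verbatim: it furnishes invertible $\C$-module and $\C^\rev$-module categories $\M$ and $\N$ with $\D_1 \cong \M \bt_{\sVec} \N$ as a $\D_0$-module category, and since $\D_0$ is itself the trivial graded component, the full decomposition reads
\[
\Z(\C) \cong (\C \bt_{\sVec} \C^\rev) \oplus (\M \bt_{\sVec} \N)
\]
as a $\C \bt_{\sVec}\C^\rev$-module category, which is exactly the claim.

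I expect the main obstacle to be the minimality verification, i.e.\ confirming that the centralizer of $\D_0$ in $\Z(\C)$ is exactly the image of $\sVec$ rather than something larger. This is where one must be careful that the slightly degenerate hypothesis on $\C$ propagates correctly to $\D_0 = \C \bt_{\sVec}\C^\rev$ and interacts properly with the canonical embedding into the modular center; everything else is a dimension count or a direct citation of the proposition just proved.
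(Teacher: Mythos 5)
Your proposal is correct and takes essentially the paper's intended route: the corollary follows from the preceding proposition once $\Z(\C)$ is recognized as a minimal extension of $\D_0 = \C \bt_{\sVec} \C^\rev$, which you establish exactly as the paper does — via the subcategory $\C \vee \C^\rev \cong \C \bt_{\sVec} \C^\rev$ of half the dimension, Lemma~\ref{Z2 grad}, and the paper's observation that a faithful non-degenerate $\mathbb{Z}/2\mathbb{Z}$-extension of a slightly degenerate category is the same thing as a minimal extension (your M\"uger double-centralizer argument is precisely the verification behind that observation). One small wording correction: the canonical braided functor $\C \bt \C^\rev \to \Z(\C)$ is an embedding only when $\C$ is non-degenerate; for slightly degenerate $\C$ it is not fully faithful but factors through $\C \bt_{\sVec} \C^\rev$, exactly as your next sentence describes, so the content of your argument is unaffected.
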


\begin{remark}
It is possible to show that the above $\M$ and $\N$ are {\em braided} $\C$-module
categories of order $2$, see \cite{DN2}.
\end{remark}

\begin{remark}
It will be interesting to see if $\C$ is a slightly degenerate braided fusion category then for such an $\mM$ as above $\C\oplus \mM$ 
has a structure of a minimal extension of $\C$.  One expects that there are $16$ choices of $\mM$ in this case, by the results of \cite{BGNPRW,KLW}.
Notice that if $\tilde{\C}=\C\oplus\mN$ \textit{is} a minimal extension of some slightly degenerate braided fusion category $\C$ then $\Z(\C)$ has the form as in Corollary \ref{center corollary}, as can be seen as follows: $\Z(\tilde{\C})\cong \tilde{\C}\boxtimes\tilde{\C}^\rev$ contains a Tannakian subcategory $\mD\cong\Rep(\mathbb{Z}/2\mathbb{Z})$ as the diagonal of $\sVec\boxtimes\sVec$. The centralizer of $\mD$ in $\Z(\tilde{\C})$ is $(\C \bt \C^\rev) \oplus (\N \bt\N^{\rev})$, so that the de-equivariantization is $$(\C \bt_{\sVec} \C^\rev) \oplus (\N \bt_{\sVec} \N^{\rev})\cong[\Z(\tilde{\C})_{\mathbb{Z}/2\mathbb{Z}}]_0\cong \Z(\C).$$
\end{remark}
\end{section}

\bibliographystyle{ams-alpha}

\end{document}